\begin{document}
\begin{frontmatter}

\titledata{Three-cuts are a charm:\\ acyclicity in 3-connected cubic graphs}
{The authors were partially supported by VEGA 1/0743/21, VEGA 1/0727/22 and APVV-19-0308.}

\authordata{Franti\v{s}ek Kardo\v{s}}
{LaBRI, CNRS, University of Bordeaux, Talence, F-33405, France; \\ Department of Computer Science, Faculty of Mathematics, Physics and Informatics\\ Comenius University, Mlynsk\'{a} Dolina, 842 48 Bratislava, Slovakia}
{frantisek.kardos@u-bordeaux.fr}{}
\authordata{Edita M\'{a}\v{c}ajov\'{a}}
{Department of Computer Science, Faculty of Mathematics, Physics and Informatics\\ Comenius University, Mlynsk\'{a} Dolina, 842 48 Bratislava, Slovakia}
{macajova@dcs.fmph.uniba.sk}{}
\authordata{Jean Paul Zerafa}
{St. Edward's College, Triq San Dwardu\\ Birgu (Citt\`{a} Vittoriosa), BRG 9039, Cottonera, Malta; \\ Department of Computer Science, Faculty of Mathematics, Physics and Informatics\\ Comenius University, Mlynsk\'{a} Dolina, 842 48 Bratislava, Slovakia; \\
Faculty of Economics, Management \& Accountancy, and Faculty of Education\\ L-Universit\`{a} ta' Malta, Msida, MSD 2080, Malta}
{zerafa.jp@gmail.com}
{}

\keywords{acyclicity, circuit, factor, perfect matching, cubic graph, snark}
\msc{05C15, 05C70}

\begin{abstract}
Let $G$ be a bridgeless cubic graph. In 2023, the three authors solved a conjecture (also known as the $S_4$-Conjecture) made by Mazzuoccolo in 2013: there exist two perfect matchings of $G$ such that the complement of their union is a bipartite subgraph of $G$. They actually show that given any $1^+$-factor $F$ (a spanning subgraph of $G$ such that its vertices have degree at least 1) and an arbitrary edge $e$ of $G$, there exists a perfect matching $M$ of $G$ containing $e$ such that $G\setminus (F\cup M)$ is bipartite. This is a step closer to comprehend better the Fan--Raspaud Conjecture and eventually the Berge--Fulkerson Conjecture. The $S_4$-Conjecture, now a theorem, is also the weakest assertion in a series of three conjectures made by Mazzuoccolo in 2013, with the next stronger statement being: there exist two perfect matchings of $G$ such that the complement of their union is an acyclic subgraph of $G$. Unfortunately, this conjecture is not true: Jin, Steffen, and Mazzuoccolo later showed that there exists a counterexample admitting 2-cuts. Here we show that, despite of this, every cyclically 3-edge-connected cubic graph satisfies this second conjecture. 
\end{abstract}

\end{frontmatter}

\section{Introduction}

In 2013, Giuseppe Mazzuoccolo \cite{MazzuoccoloS4} proposed three beguiling conjectures about bridgeless cubic graphs. His first conjecture, implied by the Berge--Fulkerson Conjecture \cite{BergeFulkerson}, is the following.

\begin{conjecture}[Mazzuoccolo, 2013 \cite{MazzuoccoloS4}]
Let $G$ be a bridgeless cubic graph. Then, there exist two perfect matchings of $G$ such that the complement of their union is a bipartite graph.
\end{conjecture}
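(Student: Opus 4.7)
The plan is to prove a more flexible statement by induction on $|V(G)|$: for any bridgeless cubic graph $G$, any $1^+$-factor $F$ of $G$, and any edge $e$ of $G$, there is a perfect matching $M$ of $G$ with $e \in M$ such that $G \setminus (F \cup M)$ is bipartite. Taking $F$ to be a perfect matching of $G$ (which exists by Petersen's theorem) and $e$ any edge then recovers the conjecture. The reason for strengthening is that the extra freedom in choosing $F$ and $e$ is exactly what makes induction at small cuts possible.

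\textbf{Reductions at small cuts.} First I will deal with $2$-edge-cuts and cycle-separating $3$-edge-cuts. If $G$ admits such a cut, I split $G$ along it into two smaller bridgeless cubic graphs $G_{1}$ and $G_{2}$ by either suppressing the cut edges or replacing the cut with a small cubic gadget (a vertex or a triangle). On each $G_{i}$ I apply the inductive hypothesis with an inherited $1^+$-factor $F_{i}$ and a suitably chosen prescribed edge $e_{i}$ that is compatible across the cut. The two resulting perfect matchings are then glued back into a perfect matching $M$ of $G$, and the bipartiteness of $G \setminus (F \cup M)$ is verified by combining $2$-colourings of the two sides, possibly adjusting the colour classes so they agree on the cut.

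\textbf{Base case.} For cyclically $4$-edge-connected cubic graphs, I start from a perfect matching $M_{1}$ and analyse the $2$-factor $H = G \setminus M_{1}$. If every circuit of $H$ is even, then $H$ has a perfect matching $M_{2}$, and $G \setminus (M_{1} \cup M_{2})$ is a disjoint union of paths, hence bipartite. If not, I modify $M_{1}$ by alternating along carefully chosen $M_{1}$-alternating circuits meeting $F$ in a controlled way, or else allow $M_{1} \cap M_{2} \neq \emptyset$, in order to hit every odd circuit of $H$ with an edge of $M_{2}$ while avoiding the creation of new odd circuits in the residue $G \setminus (F \cup M_{2})$.

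\textbf{Main obstacle.} The core difficulty is the base case for snarks: no perfect matching has a bipartite $2$-factor complement, so $M_{1}$ and $M_{2}$ are forced to overlap, and controlling the parity of cycles in the residual subgraph becomes delicate. I expect to need a $T$-join or perfect-matching-polytope argument (in the spirit of Edmonds), combined with structural lemmas about $1^+$-factors in cyclically $4$-edge-connected cubic graphs, to simultaneously guarantee that the prescribed edge $e$ lies in $M$ and that every odd circuit of $G \setminus F$ is broken by $M$ without parity side effects.
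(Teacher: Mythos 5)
Your strengthened statement is exactly Theorem \ref{theorem quelling 1} of this paper, and the deduction of the conjecture from it (take $F$ to be a perfect matching, which is a $1^+$-factor, and $e$ arbitrary) is correct. Note, however, that the present paper does not prove this conjecture at all: it quotes Theorem \ref{theorem quelling 1} from \cite{quelling1} as already established, so the only ``proof'' here is the one-line implication. Your overall architecture --- induction on $|V(G)|$ with reductions at $2$-edge-cuts and cyclic $3$-edge-cuts, leaving a cyclically $4$-edge-connected base case --- does match the strategy of \cite{quelling1} and of the acyclicity argument in this paper.

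The genuine gap is the base case, which is where essentially all of the content of the conjecture lives: for $3$-edge-colourable cubic graphs the statement is trivial, so the difficulty is entirely concentrated in cyclically $4$- and $5$-edge-connected snarks, and there your proposal offers only a speculation (``I expect to need a $T$-join or perfect-matching-polytope argument''). No argument of that kind is known to work, and it is not what \cite{quelling1} does. The actual proof first reformulates the problem: $G\setminus(F\cup M)$ is bipartite if and only if $M$ meets every odd circuit in the collection $\mathcal{C}_F$ of odd circuits of $G\setminus F$ (the analogue of the equivalence between Theorems \ref{theorem acyclic} and \ref{theorem acyclic2} stated in the introduction). It then attacks an edge $f=uv$ lying on an odd circuit of $\mathcal{C}_F$ via the Voorhoeve-type reduction used in Claim 4 of this paper --- delete $u$ and $v$ and add the edges $\alpha\beta$ and $\gamma\delta$ --- which shortens the odd circuit by two while preserving its parity, and every perfect matching of the reduced graph through the prescribed edge lifts back. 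The work then consists in controlling the drop in cyclic connectivity caused by this reduction and choosing between the two possible reconnections. Your base-case sketch also drifts back to the unstrengthened statement (you analyse $H=G\setminus M_1$ for a perfect matching $M_1$ rather than a general $1^+$-factor $F$), so as written the induction would not close: after a cut reduction the inherited data on each side is an arbitrary $1^+$-factor together with a prescribed edge, and the base case must be proved in that generality. Without a concrete mechanism replacing the ``$T$-join/polytope'' placeholder, the proposal does not constitute a proof.
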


This conjecture, which is no longer open, has been solved by the three authors. More precisely they prove the following stronger statement.

\begin{theorem}[Kardo\v{s}, M\'{a}\v{c}ajov\'{a} \& Zerafa, 2023 \cite{quelling1}]\label{theorem quelling 1}
Let $G$ be a bridgeless cubic graph. Let $F$ be a $1^+$-factor of $G$ and let $e\in E(G)$. Then, there exists a perfect matching $M$ of $G$ such that $e\in M$, and $G\setminus (F\cup M)$ is bipartite.
\end{theorem}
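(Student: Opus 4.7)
The first step is to reformulate the conclusion combinatorially. Since $F$ is a $1^+$-factor, the subgraph $H := G \setminus F$ has maximum degree at most $2$, so it is a disjoint union of paths and cycles. Because $G \setminus (F \cup M) = H \setminus (M \cap E(H))$, and paths together with even cycles are bipartite, requiring $G \setminus (F \cup M)$ to be bipartite is equivalent to requiring that \emph{every odd cycle of $H$ contains at least one edge of $M$}. The task therefore becomes: construct a perfect matching $M$ of $G$ with $e \in M$ that hits every odd cycle of $G \setminus F$.

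I would then proceed by induction on $|V(G)|$ via a minimum counterexample. Very small bridgeless cubic graphs are verified by inspection. For the inductive step I first aim to eliminate any non-trivial $2$-edge-cut and any non-trivial cyclic $3$-edge-cut: given such a cut, split $G$ along it into two smaller bridgeless cubic graphs $G_1, G_2$, transport $F$ to both sides (extending if necessary by adding one of the new ``virtual'' edges to maintain the $1^+$-factor condition), choose an appropriate prescribed edge on each side ($e$ goes into whichever side contains it, while a virtual edge is prescribed on the opposite side, its choice being dictated by the parities of cycles of $H$ crossing the cut), apply the inductive hypothesis separately, and glue the two matchings back together. The outcome is that a minimum counterexample may be assumed cyclically $4$-edge-connected.

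The heart of the argument then addresses this cyclically $4$-edge-connected case. Let $M_0$ be any perfect matching containing $e$ (which exists since in a bridgeless cubic graph every edge lies in some perfect matching). Let $\mathcal{B}(M_0)$ be the family of odd cycles of $H$ that $M_0$ misses. If $\mathcal{B}(M_0)=\emptyset$ we are done; otherwise pick $C\in\mathcal{B}(M_0)$. The key observation is that, since no edge of $C$ is in $M_0$ but every $v\in V(C)$ has $H$-degree exactly $2$, the unique $F$-edge at each vertex of $C$ must be the $M_0$-edge at that vertex; as $|V(C)|$ is odd, this forces an odd parity on the $M_0$-edges leaving $V(C)$. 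Together with the cyclic $4$-edge-connectivity of $G$, this parity obstruction should allow one to construct an $M_0$-alternating closed walk $W$ that traverses at least one edge of $C$. Switching $M_0$ along $W$ yields a new perfect matching $M_1$ still containing $e$ and now hitting $C$; iterating drives $\mathcal{B}$ down to $\emptyset$.

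The main obstacle, as is typical for such switching arguments, is ensuring that the switch does not convert previously hit odd cycles of $H$ into new misses. To keep the induction terminating, I would equip $\mathcal{B}(M)$ with a monovariant (for instance, the total number of edges in its members, or a lexicographic measure on cycle lengths) and choose $W$ so that this quantity strictly decreases. Proving that such a choice is always available under cyclic $4$-edge-connectivity, while simultaneously protecting the prescribed edge $e$, is where the real technical work lies.
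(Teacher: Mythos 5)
Theorem \ref{theorem quelling 1} is not proved in this paper at all: it is imported from \cite{quelling1}, so there is no in-paper proof to compare against. That said, your opening reduction is correct and is exactly the reformulation the authors work with (the ``disjoint odd circuits'' formulation mentioned right after the theorem statement), and eliminating $2$-edge-cuts and cyclic $3$-edge-cuts by splitting along the cut and regluing matchings is indeed the standard first phase of such arguments, as in the proof of Theorem \ref{theorem acyclic2} above.

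The genuine gap is the entire cyclically $4$-edge-connected case, which is where all the difficulty of the theorem lives and which you explicitly leave open (``where the real technical work lies''). The parity observation only yields that an odd number of $M_0$-edges leave $V(C)$; it does not produce an $M_0$-alternating closed walk through an edge of $C$, let alone one that keeps $e$ matched and does not un-hit other odd circuits of $G\setminus F$. No candidate monovariant is shown to decrease, and it is not clear that one exists: a switch along an alternating cycle can fix $C$ while simultaneously breaking several previously hit circuits, and nothing in cyclic $4$-edge-connectivity obviously rules out cycling of this process. The actual proof in \cite{quelling1} does not use matching switches at all; its core is the Voorhoeve-type $(\alpha\beta:\gamma\delta)_{uv}$-reduction at an edge at distance $2$ from $e$ (the same tool used in Claims 4--9 of the present paper), deleting the two endvertices of such an edge, applying induction to the smaller graph, and carefully analysing which of the two possible reductions preserves enough connectivity and how the circuits of the collection survive the reduction. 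So the skeleton of your plan is reasonable and the easy parts are right, but as written the proposal does not constitute a proof of the theorem.
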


We note that a \emph{$1^+$-factor} of $G$ is the edge set of a spanning subgraph of $G$ such that its vertices have degree 1, 2 or 3. Theorem \ref{theorem quelling 1} not only shows the existence of two perfect matchings of $G$ whose deletion leaves a bipartite subgraph of $G$, but that for every perfect matching of $G$ there exists a second one such that the deletion of the two leaves a bipartite subgraph of $G$. In particular, Theorem \ref{theorem quelling 1} also implies that for every collection of disjoint odd circuits of $G$, there exists a perfect matching  which intersects at least one edge from each odd circuit (this was posed as an open problem by Mazzuoccolo and the last author in \cite{s4gmjp}, see also \cite{zerafa thesis}).

Mazzuoccolo moved on to propose two stronger conjectures, with Conjecture \ref{conjecture acyclic+} being the strongest of all three.

\begin{conjecture}[Mazzuoccolo, 2013 \cite{MazzuoccoloS4}]\label{conjecture acyclic}
Let $G$ be a bridgeless cubic graph. Then, there exist two perfect matchings of $G$ such that the complement of their union is an acyclic graph.
\end{conjecture}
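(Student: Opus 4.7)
The plan is to reformulate the statement in matching terms and then attempt to strengthen Theorem \ref{theorem quelling 1} from bipartiteness to acyclicity. Fix any perfect matching $M_1$ of $G$ (which exists by Petersen's theorem); the complement $F = G\setminus M_1$ is a 2-factor, a disjoint union of cycles $C_1,\ldots,C_k$. For any second perfect matching $M_2$, the graph $G\setminus (M_1\cup M_2)$ coincides with $F\setminus M_2$, and is acyclic if and only if $M_2\cap E(C_i)\neq \emptyset$ for every $i$. Hence the conjecture reduces to exhibiting, for some 2-factor $F$, a perfect matching meeting every cycle of $F$.

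The natural line of attack is to upgrade Theorem \ref{theorem quelling 1} by replacing ``bipartite'' with ``acyclic'' throughout: for every bridgeless cubic $G$, every $1^+$-factor $F$, and every edge $e\in E(G)$, there should exist a perfect matching $M\ni e$ with $G\setminus(F\cup M)$ acyclic. Applied to a 2-factor $F$, this gives the conjecture immediately. Note that Theorem \ref{theorem quelling 1} already hands us, for free, a perfect matching $M$ meeting every odd cycle of $F$; the extra obligation is to also meet every even cycle of $F$, simultaneously and with one fixed edge $e$ prescribed.

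I would carry out the induction along the same scaffolding used in \cite{quelling1}: induct on $|V(G)|$, perform Petersen-style reductions on small edge cuts to pass to smaller bridgeless cubic graphs, and lift the matching constructed in the reduction back to $G$. The acyclicity invariant should transfer across 3-edge cuts in essentially the same way bipartiteness did, because contracting one side of a 3-cut to a single vertex preserves whether a chosen 2-factor cycle in the reduction survives as a cycle in $G$, and the freedom in choosing which of the three possible perfect matchings on the cut edges to use gives three chances to break any new cycle created by the contraction.

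The main obstacle, I anticipate, is the 2-cut case. For a 2-edge cut $\{e_1,e_2\}$, every perfect matching of $G$ contains either both or neither of $e_1,e_2$; if the chosen 2-factor $F$ contains the two cut edges, they lie on a common even cycle $C$ that traverses both sides of the cut, and breaking $C$ with $M$ forces $M$ to use either $\{e_1,e_2\}$ or an edge of $C$ strictly inside one of the two sides. These are genuinely global constraints that couple the matching choices on opposite sides of every 2-cut, and making them agree simultaneously across all 2-cuts is exactly where an induction of the type used in \cite{quelling1} threatens to break down. Resolving this compatibility — or characterising the obstructions — is the crux of the proof, and is where I expect almost all of the real work to sit.
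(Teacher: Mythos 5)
There is a fundamental problem: the statement you are trying to prove is false, and the paper says so explicitly. Jin, Steffen, and Mazzuoccolo \cite{dmgt} constructed a bridgeless cubic graph (with many 2-edge-cuts) for which no two perfect matchings have an acyclic complement; the contribution of the present paper is precisely to salvage the conjecture under the stronger hypothesis of cyclic 3-edge-connectivity (Theorem \ref{theorem acyclic}), and even there the Klee-graphs must be excluded from the pointed ``for every edge $e$'' version because some of their edges lie in a unique perfect matching. Your reformulation --- that it suffices to find, for some 2-factor $F$, a perfect matching meeting every cycle of $F$ --- is correct, and your instinct that the 2-cut case is where an induction of the type used in \cite{quelling1} threatens to break down is exactly right; but the breakdown is not a technical compatibility issue to be resolved, it is a genuine obstruction realized by the counterexample. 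The proposed upgrade of Theorem \ref{theorem quelling 1} from ``bipartite'' to ``acyclic'' over all bridgeless cubic graphs therefore cannot succeed, and no amount of care in the 2-cut analysis will repair it.

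If you restrict to cyclically 3-edge-connected graphs, the paper's actual proof (of the equivalent Theorem \ref{theorem acyclic2}) does follow the broad scaffolding you sketch --- minimal counterexample, reductions across cyclic 3- and 4-edge-cuts, and Voorhoeve-type reductions at edges near the prescribed edge $e$ --- but it needs substantial extra machinery your outline does not anticipate: a separate verification for ladders, M\"obius ladders and quasi-ladders; a running analysis of when a reduction accidentally produces a Klee-graph (which would block the induction, since Klee-graphs do not satisfy the pointed statement); and girth and distance arguments (Claims 4--9) controlling which circuits of $\mathcal{C}$ may pass close to $e$. So the correct next step for you is not to push the induction through the 2-cut case, but to weaken the claim to the cyclically 3-edge-connected setting and then confront these additional obstacles.
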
 

\begin{conjecture}[Mazzuoccolo, 2013 \cite{MazzuoccoloS4}]\label{conjecture acyclic+}
Let $G$ be a bridgeless cubic graph. Then, there exist two perfect matchings of $G$ such that the complement of their union is an acyclic graph, whose components are of order 2 or 3.
\end{conjecture}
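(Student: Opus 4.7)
Any two perfect matchings $M_{1},M_{2}$ of a cubic graph $G$ make the complement $C := E(G)\setminus(M_{1}\cup M_{2})$ into a graph of maximum degree $2$: a vertex is incident to exactly $2$ edges of $C$ if it is covered by $M^{\ast}:=M_{1}\cap M_{2}$, and to exactly $1$ edge of $C$ otherwise. Hence $C$ decomposes into components of order $2$ or $3$ if and only if $C$ is acyclic and $V(M^{\ast})$ is an independent set in $C$ (otherwise some path of $C$ would contain two consecutive interior vertices and would therefore have order $\geq 4$). When $G$ is $3$-edge-colourable the statement is trivial: take $M_{1},M_{2}$ to be two colour classes of a proper $3$-edge-colouring, so that $M^{\ast}=\emptyset$ and $C$ is the third colour class, a perfect matching. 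The substantive case is the class of snarks, where $M_{1}\cap M_{2}\neq\emptyset$ is forced.

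The plan is to bootstrap from Theorem \ref{theorem quelling 1}. Applied with $F$ equal to an arbitrary perfect matching $M_{1}$ and $e$ any edge of $M_{1}$, the theorem yields a second perfect matching $M_{2}\ni e$ such that $G\setminus(M_{1}\cup M_{2})$ is bipartite, with $e\in M^{\ast}$ already providing one $P_{3}$-centre. To reach the stronger conclusion I would attempt a \emph{refinement} of Theorem \ref{theorem quelling 1}: given a $1^+$-factor $F$ and an edge $e$, produce $M\ni e$ such that $G\setminus(F\cup M)$ is acyclic and has all components of order $\leq 3$. The proof would revisit the alternating-path / Kempe-chain machinery underlying Theorem \ref{theorem quelling 1} and carry through the inductive step an additional invariant bounding component sizes. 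A complementary local-exchange routine would handle any residual long component: whenever the current $C$ contains a path of length $\geq 3$, swap an interior $C$-edge into $M^{\ast}$ via an alternating move, splitting the long component at its middle without elongating any other.

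The decisive obstacle, acknowledged already in the abstract, is that Conjecture \ref{conjecture acyclic} — strictly weaker than Conjecture \ref{conjecture acyclic+} — is known to \emph{fail} on bridgeless cubic graphs admitting a $2$-edge-cut, by the Jin--Steffen--Mazzuoccolo counterexample. Hence Conjecture \ref{conjecture acyclic+} as worded cannot hold for all bridgeless cubic graphs, and any viable plan along the above lines must either restrict to the cyclically $3$-edge-connected regime — as the present paper does for Conjecture \ref{conjecture acyclic} — and then sharpen the conclusion to enforce components of order $\leq 3$, or else reformulate the statement so as to accommodate the $2$-cut structure. The hardest step I foresee is the size bookkeeping during the alternating-path exchanges: shortening one long component of $C$ typically risks lengthening another, and a global argument has to rule this out uniformly across the case analysis inherited from the cyclically $3$-edge-connected proof.
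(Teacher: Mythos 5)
The paper contains no proof of this statement: it is stated verbatim as a conjecture of Mazzuoccolo, and the authors leave it open, so there is nothing of theirs to compare your attempt against. More importantly, your own third paragraph reaches the correct verdict: since Conjecture \ref{conjecture acyclic+} implies Conjecture \ref{conjecture acyclic} (an acyclic complement with components of order $2$ or $3$ is in particular acyclic), the Jin--Steffen--Mazzuoccolo counterexample to the latter refutes the former for general bridgeless cubic graphs. No proof of the statement as worded can exist, and your proposal rightly does not deliver one.

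Two remarks on the substance of what you wrote. Your opening reduction is correct and worth keeping: the complement $C$ has maximum degree $2$, its degree-$2$ vertices are exactly those covered by $M_1\cap M_2$, and the conclusion of the conjecture is equivalent to $C$ being acyclic with no two such vertices adjacent in $C$. This is a clean reformulation that the paper does not state. Your proposed bootstrap from Theorem \ref{theorem quelling 1}, however, would at best target the weaker acyclicity conclusion, and even that is exactly what the paper's Theorem \ref{theorem acyclic} achieves only after restricting to cyclically $3$-edge-connected non-Klee graphs via a long minimal-counterexample analysis (Claims 1--9); the additional ``component size'' invariant you would need to carry through the reductions is not addressed anywhere in the paper, and the local-exchange step you sketch (splitting a long path of $C$ without lengthening another) is precisely the part with no supporting argument. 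If you want a provable statement in this direction, the honest target is the restricted version: does every cyclically $3$-edge-connected cubic graph that is not a Klee-graph admit two perfect matchings whose complement is acyclic with all components of order $2$ or $3$? That remains open even given the paper's main theorem.
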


Clearly, these last two conjectures are true for 3-edge-colourable cubic graphs, and Janos H\"agglund verified the strongest of these conjectures (Conjecture \ref{conjecture acyclic+}) by computer for all non-trivial snarks (non 3-edge-colourable cubic graphs) of order at most 34 \cite{MazzuoccoloS4}. 
However, 5 years later, Jin, Steffen, and Mazzuoccolo \cite{dmgt} gave a counterexample to Conjecture \ref{conjecture acyclic}. Their counterexample contains a lot of 2-edge-cuts and the authors state that the conjecture "could hold true for 3-connected or cyclically 4-edge-connected cubic graphs". In fact, as in real life, being more connected has its own benefits, and in this paper we show the following stronger statement.

\begin{theorem}\label{theorem acyclic}
Let $G$ be a cyclically 3-edge-connected cubic graph, which is not a Klee-graph. Then, for any $e\in E(G)$ and any $1^+$-factor $F$ of $G$, there exists a perfect matching $M$ of $G$ containing $e$ such that $G\setminus (F\cup M)$ is acyclic.
\end{theorem}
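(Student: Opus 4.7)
The plan is to proceed by induction on $|V(G)|$, reducing $G$ along any non-trivial cyclic 3-edge-cut whenever one is available, and handling the remaining cyclically 4-edge-connected graphs directly using Theorem \ref{theorem quelling 1}. Given a non-trivial cyclic 3-edge-cut $T=\{f_1,f_2,f_3\}$ separating $V(G)$ into $A\cup B$ with each side containing a cycle, I would contract $B$ to a single vertex $a^*$ incident to $f_1,f_2,f_3$, producing a smaller cubic graph $G_A$ which remains cyclically 3-edge-connected; symmetrically one forms $G_B$. The edge $e$ and the $1^+$-factor $F$ are distributed appropriately to the two sides (with $F$ restricted to internal edges together with whichever cut edges it contains, and $e$ handled on the side that contains it, the other side being handed a carefully chosen substitute edge), and the inductive hypothesis applied separately to $G_A$ and $G_B$ produces perfect matchings $M_A$ and $M_B$ whose corresponding complements are acyclic. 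These combine to a perfect matching $M$ of $G$ provided the choices made on the three cut edges agree.

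The base of the induction consists of those cyclically 3-edge-connected cubic graphs with no non-trivial cyclic 3-edge-cut, that is, the cyclically 4-edge-connected cubic graphs (plus a few small exceptional graphs). For these I would invoke Theorem \ref{theorem quelling 1} to obtain a perfect matching $M$ containing $e$ such that $G\setminus(F\cup M)$ is bipartite; since every vertex has degree at most $2$ in the complement, the only obstructions to acyclicity are even cycles. I expect these to be removed one by one by $(F\cup M)$-alternating swaps, exchanging edges along a short alternating path between an even cycle $D$ and the rest of the graph; the cyclic 4-edge-connectivity is what should provide enough room to perform such a swap without creating a new cycle.

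The main obstacle lies in the gluing step of the reduction. The inductive solutions on $G_A$ and $G_B$ determine how the components of the complement meet at the cut $T$, and only certain combinations of local behaviours on the two sides avoid closing a cycle across $T$. I therefore expect the proof to proceed via a strengthened inductive hypothesis which allows us to prescribe not merely that $e$ belongs to $M$, but also the \emph{shape} of the solution at the reduction vertex (for instance, which subset of the three cut edges lies in $M$, or how the path of the complement traverses the cut vertex). The Klee-graph exception should surface precisely here: in a Klee-graph the recursive triangle structure rigidifies the possible local configurations at every cyclic 3-edge-cut, collapsing the required flexibility and forcing the unique bad pattern that creates a cycle across $T$. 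A further subtlety is to ensure that the condition of \emph{not being a Klee-graph} is preserved, or can be appropriately handled, along the reduction, so that the induction terminates without bottoming out on an unreducible Klee sub-structure.
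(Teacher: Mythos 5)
Your reduction along cyclic 3-edge-cuts is indeed the first step of the paper's argument (its Claim~2), and the gluing issue you worry about is real but is resolved there without any strengthened hypothesis: working with the equivalent circuit formulation (Theorem~\ref{theorem acyclic2}), a circuit crossing the cut uses exactly two of the three cut edges, one closes it up on each side, and the freedom to prescribe which cut edge lies in the matching on one side and to match it on the other side suffices. The genuine difficulties you leave open are elsewhere. First, a side of the cut may itself be (or become) a Klee-graph --- most simply $K_4$ when $G$ has a triangle --- so the inductive hypothesis cannot be applied to it; the paper devotes a separate case to this, and you only flag it as a ``subtlety''.

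The fatal gap is your base case. Stopping at cyclically 4-edge-connected graphs and invoking Theorem~\ref{theorem quelling 1} to get a bipartite complement, then ``removing even cycles one by one by alternating swaps'', is not a proof and there is no reason to believe it can be made into one: exchanging $M$ along an alternating path or cycle through an even cycle $D$ of $G\setminus(F\cup M)$ can expel $e$ from $M$, can uncover odd or even cycles that the old $M$ was hitting, and can create new cycles in the complement, with no monotone quantity to control the process. This step is precisely where the real work lies, and the paper does something entirely different: it continues the reduction through cyclic 4-edge-cuts (a substantial case analysis of its own, which forces it to first dispose of the exceptional families of ladders, M\"obius ladders and quasi-ladders that arise from Klee-ladder obstructions), and then, on cyclically 5-edge-connected graphs, runs a delicate local argument around the prescribed edge $e$ using Voorhoeve-type edge reductions at distance~2 from $e$, including a careful analysis of when such a reduction yields a Klee-graph or a small edge-cut. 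None of this machinery is present or replaceable by your swapping heuristic, so the proposal does not establish the theorem.
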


We 
remark that Klee-graphs (see Definition \ref{definition klee}), which are to be discussed further in Section \ref{section kleegraphs}, are 3-edge-colourable cubic graphs and so are not a counterexample to Conjecture \ref{conjecture acyclic}. However, the stronger statement given in Theorem \ref{theorem acyclic} does not hold for this class of graphs, and this is the reason why we exclude them. 


Although Theorem \ref{theorem acyclic} is not a direct consequence of the Berge--Fulkerson Conjecture, we believe that the results presented here and in \cite{quelling1} are valuable steps towards trying to decipher long-standing conjectures such as the Fan--Raspaud Conjecture \cite{FanRaspaud}, and the Berge--Fulkerson Conjecture itself.

In fact, we will prove the following statement, which is equivalent to Theorem \ref{theorem acyclic}.
\begin{theorem}\label{theorem acyclic2}
Let $G$ be a cyclically 3-edge-connected cubic graph, which is not a Klee-graph. Then, for any $e\in E(G)$ and any collection of disjoint circuits $\mathcal{C}$, there exists a perfect matching $M$ of $G$ containing $e$ such that every circuit in $\mathcal{C}$ contains an edge from $M$.
\end{theorem}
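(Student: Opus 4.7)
We plan to prove Theorem \ref{theorem acyclic2} by strong induction on $|V(G)|$, with $3$-edge-cut splittings playing the starring role, as the title of the paper suggests. The base cases are the smallest cyclically $3$-edge-connected cubic graphs that are not Klee-graphs (such as $K_{3,3}$), for which the statement can be verified by inspection since the set of perfect matchings is tiny. For the inductive step we fix $G$, $e\in E(G)$, and a collection $\mathcal{C}$ of vertex-disjoint circuits, and distinguish three structural cases according to which reduction applies.

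Our first reduction uses a triangle. Let $T=v_1v_2v_3$ be a triangle in $G$, with outside edges $e_i=v_iu_i$. A simple counting argument shows that at most one circuit of $\mathcal{C}$ can touch $T$: any non-triangular circuit through $T$ must visit at least two of $v_1,v_2,v_3$, so disjointness forbids a second such circuit. We contract $T$ to a single vertex $v$, obtaining a smaller cubic graph $G'$, and transfer $\mathcal{C}$ to a disjoint collection $\mathcal{C}'$ in $G'$ by leaving untouched every circuit avoiding $T$, discarding $T$ itself (which is hit automatically, since any perfect matching of $G$ uses precisely one edge of $T$), and replacing the possibly single circuit crossing $T$ by the shorter circuit it traces through $v$. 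Standard arguments show that $G'$ is cyclically $3$-edge-connected, and, by the recursive definition of Klee-graphs, not a Klee-graph. The inductive hypothesis applied to $G'$ with an appropriately prescribed edge then yields a perfect matching $M'$ that lifts uniquely to the required $M$ in $G$.

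Our second reduction handles non-trivial $3$-edge-cuts. Suppose $\{f_1,f_2,f_3\}$ separates $G$ into two sides, each containing a circuit. Contracting each side to a single vertex produces two strictly smaller cyclically $3$-edge-connected cubic graphs $H_1$ and $H_2$. We first apply induction to the side containing $e$, obtaining a matching that meets either one or three edges of $\{f_1,f_2,f_3\}$, and then apply induction to the other side with a prescribed cut edge so that the two matchings agree across the cut. Circuits of $\mathcal{C}$ lying entirely in a single side are placed into the corresponding sub-instance; a circuit crossing the cut splits into two paths which close, via the contracted vertex, into shorter circuits added to each local collection. Configurations in which both $H_i$ happen to be Klee-graphs require additional care, and the recursive definition of Klee-graphs should imply that this occurs only if $G$ itself is a Klee-graph, contradicting our hypothesis.

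If neither reduction applies, $G$ is triangle-free and cyclically $4$-edge-connected, and this is where we expect the main obstacle. Theorem \ref{theorem quelling 1} yields, for the prescribed $e$, a perfect matching $M_0\ni e$ hitting every odd circuit of $\mathcal{C}$; the remaining task is to simultaneously break all even circuits. We plan to do this by a local rerouting argument: for each even circuit $C\in\mathcal{C}$ not yet hit by $M_0$, we find an $M_0$-alternating subgraph inside $C$ whose symmetric difference with $M_0$ trades a matching edge for a $C$-edge while preserving $e\in M$, and we argue, using cyclic $4$-edge-connectivity together with triangle-freeness, that these moves can be performed compatibly, one circuit at a time, without disturbing the odd-circuit cover provided by Theorem \ref{theorem quelling 1}. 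Establishing that the rerouting never gets stuck in a non-Klee-graph, and that the compatibility between different even circuits can always be arranged, is the principal challenge.
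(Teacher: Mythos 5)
Your first two reductions are broadly in the spirit of the paper's treatment of cyclic 3-edge-cuts, but they contain inaccuracies you would need to repair. It is not true that ``any perfect matching of $G$ uses precisely one edge of $T$'': by parity a perfect matching meets the 3-cut around a triangle in one \emph{or three} edges, so a matching of $G$ may avoid $T$ entirely; your claim only holds for matchings \emph{lifted} from the contraction, which is what you actually need, so this is repairable but mis-justified. More seriously, your resolution of the Klee obstruction at a 3-cut is wrong as stated: one side being Klee does \emph{not} force $G$ to be Klee (e.g.\ one side is $K_4$ and the other is an arbitrary non-Klee piece), and your induction cannot be applied to a Klee side. The correct statement, which the paper proves, is that after triangles are eliminated neither side can be Klee, because every triangle of a contracted side passes through the new vertex while a Klee-graph on at least six vertices has two disjoint triangles (Lemma \ref{lemma 2 disjoint triangles klee}); the one genuinely Klee side, $K_4$, is handled by direct extension rather than by induction.

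The decisive gap, however, is your third case. Once $G$ is triangle-free and cyclically 4-edge-connected, essentially all of the difficulty of the theorem remains, and your plan --- start from a matching given by Theorem \ref{theorem quelling 1} that hits the odd circuits of $\mathcal{C}$ and then ``reroute'' along $M_0$-alternating subgraphs to pick up the even circuits one at a time --- is not an argument but a wish. Taking symmetric differences with alternating cycles can remove $e$ from the matching, can un-hit circuits already hit (including the odd ones certified by Theorem \ref{theorem quelling 1}), and you offer no invariant or ordering that prevents this; ``can be performed compatibly'' is exactly the statement that needs proof, and you concede as much. The paper does something entirely different here: it first eliminates cyclic 4-edge-cuts by a case analysis over how $\mathcal{C}$ crosses the cut (choosing among three ways to split the cut so that no piece is Klee, which is where ladders, M\"obius ladders and quasi-ladders arise and must be disposed of separately), and then, in the cyclically 5-edge-connected case, applies Voorhoeve-type edge reductions at edges at distance 2 from $e$, with a delicate analysis (via Lemma \ref{lemma 4 circuit klee} and the Klee-ladder structure, plus the Petersen graph as a sporadic case) of when the reduced graph is Klee or acquires a 2-edge-cut. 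None of this machinery is present in, or replaceable by, your rerouting sketch, so the proposal does not constitute a proof.
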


Indeed, given a collection of disjoint circuits $\mathcal{C}$, its complement is a $1^+$-factor, say $F_{\mathcal{C}}$. A perfect matching $M$ containing $e$ such that $G \setminus (F_{\mathcal{C}} \cup M)$ is acyclic must contain an edge from every circuit in $\mathcal{C}$. On the other hand, given a $1^+$-factor $F$, its complement is a collection of disjoint paths and circuits, and so it suffices to consider the collection $\mathcal{C}_F$ of circuits disjoint from $F$. A perfect matching $M$ containing $e$ such that every circuit in $\mathcal{C}_F$ contains an edge from $M$, clearly makes 
$G\setminus (F \cup M)$ acyclic. 

\subsection{Important definitions and notation}

Graphs considered in this paper are simple, that is, they cannot contain parallel edges and loops, unless otherwise stated. 

Let $G$ be a graph and $(V_1,V_2)$ be a partition of its vertex set, that is, $V_1\cup V_2=V(G)$ and $V_1\cap V_2=\emptyset$. Then, by $E(V_1,V_2)$ we denote the set of edges having one endvertex in $V_1$ and one in $V_2$; we call such a set an \emph{edge-cut}. An edge which itself is an edge-cut of size one is a \emph{bridge}. A graph which does not contain any bridges is said to be \emph{bridgeless}.

An edge-cut $X=E(V_1,V_2)$ is called \emph{cyclic} if both graphs $G[V_1]$ and $G[V_2]$, obtained from $G$ after deleting $X$, contain a \emph{circuit} (a 2-regular connected subgraph). The \emph{cyclic edge-connectivity} of a graph $G$ is defined as the smallest size of a cyclic edge-cut in $G$ if $G$ admits one; it is defined as $|E(G)|-|V(G)|+1$, otherwise. For cubic graphs, the latter only concerns $K_4$, $K_{3,3}$, and the graph consisting of two vertices joined by three parallel edges, whose cyclic edge-connectivity is thus 3, 4, and 2, respectively. An \emph{acyclic} graph is a graph which does not contain any circuits.

Let $G$ be a bridgeless cubic graph. A \emph{$1^+$-factor} of $G$ is the edge set of a spanning subgraph of $G$ such that its vertices have degree 1, 2 or 3. In particular, a \emph{perfect matching} and a \emph{$2$-factor} of $G$ are $1^+$-factors whose vertices have exactly degree 1 and 2, respectively. 

\section{Klee-graphs}\label{section kleegraphs}

\begin{definition}[\cite{klee dmtcs}]\label{definition klee}
A graph $G$ is a Klee-graph if $G$ is the complete graph on 4 vertices $K_4$ or there exists a Klee-graph $G_0$ such that $G$ can be obtained from $G_0$ by replacing a vertex by a triangle (see Figure \ref{fig:exklee}).
\end{definition}

\begin{figure}[ht]
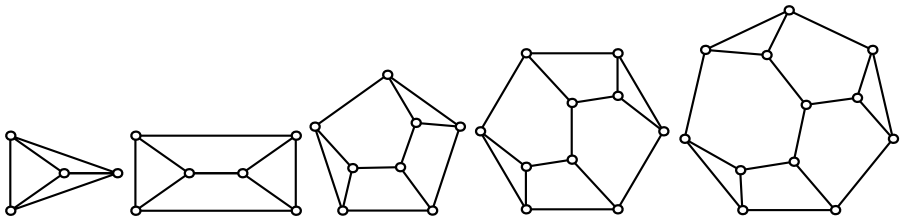

    \centering
    \includegraphics{klee.4}
    \includegraphics{klee.6}
    \includegraphics{klee.8}
    \includegraphics{klee.10}
    \includegraphics{klee.12}
    \caption{Examples of Klee-graphs on 4 upto 12 vertices, left to right.}
    \label{fig:exklee}
\end{figure}
For simplicity, if a graph $G$ is a Klee-graph, we shall sometimes say that $G$ is Klee. We note that there is a unique Klee-graph on 6 vertices (the graph of a 3-sided prism), and a unique Klee-graph on 8 vertices. As we will see in Section \ref{subsection klee}, these two graphs are Klee ladders, and shall be respectively denoted as $KL_6$ and $KL_8$. 

\begin{lemma}[\cite{klee dmtcs}]
The edge set of any Klee-graph can be uniquely partitioned into three pairwise
disjoint perfect matchings. In other words, any Klee-graph is 3-edge-colourable, and the colouring is unique up to a permutation of the colours.
\end{lemma}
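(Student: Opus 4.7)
The plan is to argue by induction on $|V(G)|$, following the recursive definition of a Klee-graph. For the base case $G=K_4$, one checks directly that $K_4$ has exactly three perfect matchings, each consisting of a pair of disjoint edges, that together they partition $E(K_4)$, and that any proper $3$-edge-colouring of $K_4$ must agree with this partition up to permutation of the colour classes.

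For the inductive step, suppose $G$ is obtained from a smaller Klee-graph $G_0$ by replacing a vertex $v$ with a triangle $T$ on new vertices $a,b,c$. The heart of the argument is a bijection between proper $3$-edge-colourings of $G$ and those of $G_0$. Given a proper $3$-edge-colouring of $G$, the three edges of $T$ are pairwise adjacent and so must receive the three colours $\{1,2,3\}$; consequently, at each vertex of $T$ the unique non-triangle edge is forced to take the colour missing from the two triangle edges incident to it, so the three \emph{external} edges at $a,b,c$ also receive three distinct colours. Contracting $T$ back to $v$ therefore yields a proper $3$-edge-colouring of $G_0$. Conversely, given a proper $3$-edge-colouring of $G_0$, the three edges at $v$ carry three distinct colours, which we transfer to the external edges at $a,b,c$ in $G$; the colour of each triangle edge $xy$ is then forced to be the colour not already appearing at $x$ or $y$, and a short check confirms that this assignment is well-defined and proper.

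Applying the inductive hypothesis to $G_0$ gives a proper $3$-edge-colouring of $G_0$, unique up to permutation of the colours, which via the bijection lifts to a proper $3$-edge-colouring of $G$ with the same uniqueness property; re-reading colour classes as perfect matchings delivers the unique partition of $E(G)$ into three pairwise disjoint perfect matchings. The only delicate point is the forcing inside the expanded triangle, but since any triangle in a cubic graph has this rigidity, the step is immediate; the rest is routine induction. One may also observe that if $G$ admits several triangle-expansion descriptions, the argument goes through for any of them, so the proof is independent of the decomposition chosen.
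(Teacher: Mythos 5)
The paper does not prove this lemma at all --- it is quoted verbatim from the cited reference (Cygan, Pilipczuk, \v{S}krekovski), so there is no in-paper argument to compare against. Your induction is correct and is the standard proof: the base case for $K_4$ is a direct check, and the key observation --- that in any proper $3$-edge-colouring the three triangle edges and the three external edges of an expanded triangle each receive all three colours, so that triangle expansion induces a bijection between proper $3$-edge-colourings of $G$ and of $G_0$ --- is exactly the rigidity that makes both existence and uniqueness descend through the recursive construction. The identification of colour classes with perfect matchings in a cubic graph, which you note in passing, is the only remaining step and is immediate. Nothing is missing.
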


Since Klee-graphs are 3-edge-colourable, they easily satisfy the statement of Conjecture \ref{conjecture acyclic}.

\begin{proposition}\label{prop klee}
Let $G$ be a Klee-graph. Then, $G$ admits two perfect matchings $M_1$ and $M_2$ such that $G\setminus (M_1\cup M_2)$ is acyclic.
\end{proposition}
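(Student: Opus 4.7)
The plan is to exploit the preceding lemma directly: the entire edge set of any Klee-graph $G$ partitions into three pairwise disjoint perfect matchings $M_1$, $M_2$, $M_3$. I would simply pick any two of them, say $M_1$ and $M_2$. Then $E(G)\setminus (M_1\cup M_2)$ is precisely the third colour class $M_3$, so the subgraph $G\setminus (M_1\cup M_2)$ coincides with the spanning $1$-regular subgraph induced by $M_3$. Such a subgraph is a disjoint union of edges, contains no circuits, and is therefore acyclic; this establishes the proposition.

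No substantial obstacle is expected, since the hard input has already been provided by the 3-edge-colourability of Klee-graphs. It is worth noting in passing that the conclusion obtained is in fact stronger than required: every component of $G\setminus (M_1\cup M_2)$ has order exactly $2$, so Klee-graphs actually satisfy the stronger Conjecture~\ref{conjecture acyclic+}. This observation is consistent with the remark made after Theorem~\ref{theorem acyclic} explaining that Klee-graphs are excluded only because Theorem~\ref{theorem acyclic} demands the acyclic complement to be achievable while forcing a prescribed edge $e$ into $M$ and starting from an arbitrary $1^+$-factor $F$, which is a strictly stronger requirement than the one considered in Proposition~\ref{prop klee}.
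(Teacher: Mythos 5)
Your proof is correct and follows exactly the route the paper intends: it invokes the 3-edge-colourability of Klee-graphs and observes that the complement of two colour classes is the third perfect matching, hence a $1$-regular (indeed acyclic) subgraph whose components have order $2$. This is precisely why the paper states the proposition with only the one-line remark that Klee-graphs, being 3-edge-colourable, ``easily satisfy'' the conclusion.
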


The new graph obtained after expanding a vertex of a Hamiltonian graph (not necessarily Klee) into a triangle is still Hamiltonian, and so, since $K_4$ is Hamiltonian, all Klee-graphs are Hamiltonian. Hamiltonian cubic graphs have the following distinctive property.

\begin{proposition}\label{prop ham collection}
Let $G$ be a Hamiltonian cubic graph. Then, for any collection of disjoint circuits $\mathcal{C}$ of $G$ there exists a perfect matching $M$ of $G$ which intersects at least one edge of every circuit in $\mathcal{C}$.
\end{proposition}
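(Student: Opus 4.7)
My plan is to exploit the three perfect matchings naturally associated with a Hamilton circuit of a cubic graph. Let $H$ be a Hamilton circuit of $G$. Since $G$ is cubic, $|V(G)|$ is even, so $H$ has even length; its edges therefore decompose, by alternating along $H$, into two perfect matchings $M_{1},M_{2}$ of $G$. The remaining edges $M_{0}:=E(G)\setminus E(H)$ use the unique third edge incident with each vertex of $G$, and so form a third perfect matching.

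Next I would split into two cases according to whether $H\in\mathcal{C}$. If $H\in\mathcal{C}$, then because $H$ already spans $V(G)$ and the members of $\mathcal{C}$ are pairwise vertex-disjoint, we must have $\mathcal{C}=\{H\}$, and either of $M_{1},M_{2}$ intersects $H$, so we are done.

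In the complementary case $H\notin\mathcal{C}$, I would take $M:=M_{0}$ and argue that it meets every $C\in\mathcal{C}$. The key observation is that $E(H)$ forms a single spanning circuit on $V(G)$, so any circuit of $G$ whose edges all lie in $E(H)$ must coincide with $H$. Hence every $C\in\mathcal{C}$, being a circuit distinct from $H$, must contain at least one chord, i.e.\ at least one edge of $M_{0}$, as required.

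There is essentially no obstacle here: the whole argument collapses once one notices that the complement of a Hamilton circuit in a cubic graph is a perfect matching. The only delicate point is to use the vertex-disjointness convention for "disjoint circuits", which is exactly what lets us dispose of the case $H\in\mathcal{C}$ by reducing $\mathcal{C}$ to a singleton.
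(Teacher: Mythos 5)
Your proof is correct and takes essentially the same approach as the paper: both decompose $E(G)$ into the chordal perfect matching $M_0$ and the two matchings alternating along the Hamilton circuit $H$, and both rest on the observation that the only circuit avoiding $M_0$ is $H$ itself, which is then hit by $M_1$ or $M_2$. The paper phrases this as a proof by contradiction whereas you argue directly by cases, but the content is identical.
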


\begin{proof}
Since $G$ is Hamiltonian, it admits three disjoint perfect matchings $M_1,M_2,M_3$ covering $E(G)$ such that at least two of them induce a Hamiltonian circuit. Without loss of generality, assume that $M_2\cup M_3$ induce a Hamiltonian circuit. Let $\mathcal{C}$ be a collection of disjoint circuits of $G$ for which the statement of the proposition does not hold. In particular, this implies that $M_1$ does not intersect all the circuits in $\mathcal{C}$ --- since the complement of $M_1$ is a Hamiltonian circuit, $\mathcal{C}$ consists of exactly one circuit. However, this means that $M_2$ (or $M_3$) intersects the only circuit in $\mathcal{C}$, contradicting our initial assumption.
\end{proof}

\begin{corollary}\label{cor klee collection}
For any collection of disjoint circuits $\mathcal{C}$ of a Klee-graph $G$ there exists a perfect matching $M$ of $G$ which intersects at least one edge of every circuit in $\mathcal{C}$.
\end{corollary}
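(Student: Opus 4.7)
The statement to establish is \textbf{Corollary \ref{cor klee collection}}, which asserts that for any collection of disjoint circuits $\mathcal{C}$ in a Klee-graph $G$, some perfect matching $M$ hits every circuit of $\mathcal{C}$. The plan is essentially to observe that this is a one-line consequence of the two ingredients just assembled: \emph{(i)} Klee-graphs are Hamiltonian, which was derived in the paragraph preceding Proposition \ref{prop ham collection} by noting that expanding a vertex of a Hamiltonian cubic graph into a triangle preserves Hamiltonicity and that $K_4$ is Hamiltonian (so, by induction on the number of triangle-expansions used to construct $G$ from $K_4$, the graph $G$ is Hamiltonian); and \emph{(ii)} Proposition \ref{prop ham collection}, which guarantees the desired matching for any Hamiltonian cubic graph.

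Concretely, I would simply invoke these two facts in sequence: since $G$ is Klee, it is Hamiltonian by the observation above, and since it is a Hamiltonian cubic graph, applying Proposition \ref{prop ham collection} to $G$ and the given family $\mathcal{C}$ yields a perfect matching $M$ of $G$ that meets every circuit of $\mathcal{C}$. There is no obstacle here: all the work has been done in the preceding Proposition \ref{prop ham collection}, whose proof exploits the fact that a Hamiltonian cubic graph admits three perfect matchings two of which form a Hamiltonian circuit, so the only matching that could possibly fail to intersect all of $\mathcal{C}$ is the one whose complement is Hamiltonian, and then one of the other two matchings does the job.

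Thus the entire proof is a two-sentence deduction, and the only thing one must be careful about is to state explicitly the Hamiltonicity of Klee-graphs (already justified in the text) before invoking Proposition \ref{prop ham collection}.
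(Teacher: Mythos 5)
Your proposal is correct and matches the paper's intended argument exactly: the corollary is deduced by combining the observation (made just before Proposition \ref{prop ham collection}) that all Klee-graphs are Hamiltonian with Proposition \ref{prop ham collection} itself. The paper leaves this two-step deduction implicit, and your write-up simply makes it explicit.
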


On the other hand, we have to exclude Klee-graphs from Theorem \ref{theorem acyclic} (and Theorem \ref{theorem acyclic2}) since for some Klee-graphs there are edges contained in a unique perfect matching, as we will see in the following subsection.

\subsection{Other results about Klee-graphs}\label{subsection klee}

\begin{lemma}[\cite{klee dmtcs}]\label{lemma 2 disjoint triangles klee}
Let $G$ be a Klee-graph on at least 6 vertices. Then, $G$ has at least two triangles and all its triangles are vertex-disjoint.
\end{lemma}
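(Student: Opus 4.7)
The plan is to proceed by induction on $n = |V(G)|$, exploiting the recursive definition of a Klee-graph. For the base case $n = 6$, the graph $G$ is obtained from $K_4$ by expanding a single vertex $v$ into a triangle $T_v$; a direct check shows that the three triangles of $K_4$ through $v$ each become a $4$-cycle in $G$, while the unique triangle of $K_4$ opposite to $v$ persists. Hence $G$ has exactly two triangles, $T_v$ and that opposite face, and they are vertex-disjoint.

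For the inductive step, let $n \geq 8$ and suppose the statement holds for every Klee-graph of order strictly less than $n$. By Definition~\ref{definition klee}, there exist a Klee-graph $G_0$ on $n-2 \geq 6$ vertices and a vertex $v \in V(G_0)$ such that $G$ is obtained from $G_0$ by expanding $v$ into a triangle $T_v$. The key structural observation, and what I expect to be the main delicate point, is that the triangles of $G$ are exactly $T_v$ together with those triangles of $G_0$ that avoid $v$. Indeed, any triangle $vuw$ of $G_0$ through $v$ is replaced in $G$ by a $4$-cycle passing through two vertices of $T_v$ and the two old neighbours $u,w$; conversely, any purely new triangle in $G$ would have to use at least two vertices of $T_v$, forcing two distinct neighbours of $v$ in $G_0$ to coincide and thereby contradicting the simplicity of $G_0$ (a property that itself propagates inductively from the simplicity of $K_4$).

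Applying the inductive hypothesis to $G_0$ yields at least two pairwise vertex-disjoint triangles of $G_0$, so at least one of them, call it $T$, avoids $v$. Then $T_v$ and $T$ are two vertex-disjoint triangles of $G$, since $T_v$ consists only of the three new vertices replacing $v$ while $T \subseteq V(G_0) \setminus \{v\}$. More generally, any two distinct triangles of $G$ are either both inherited from $G_0$, and hence disjoint by the inductive hypothesis, or one is $T_v$ and the other avoids $v$, and therefore also avoids the three vertices of $T_v$. This establishes both the existence of at least two triangles and the pairwise vertex-disjointness of all triangles of $G$, completing the induction.
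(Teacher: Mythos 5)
Your proof is correct and rests on exactly the observation the paper itself offers in lieu of a proof (the lemma is cited from the Klee-graph reference, and the paper only remarks that ``expanding a vertex into a triangle can only destroy triangles containing the vertex to be expanded''); your induction is simply the fully written-out version of that same idea. No substantive difference in approach.
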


Indeed, expanding a vertex into a triangle can only destroy triangles containing the vertex to be expanded. 

We will now define a series of particular Klee-graphs, which we will call \emph{Klee ladders}. Let $KL_4$ be the complete graph on 4 vertices, and let $u_4v_4$ be an edge of $KL_4$. For any even $n\ge 4$, let $KL_{n+2}$ be the Klee-graph obtained from $KL_n$ by expanding the vertex $u_n$ into a triangle. In the resulting graph $KL_{n+2}$, we denote the vertex corresponding to $v_n$ by $v_{n+2}$, and denote the vertex of the new triangle adjacent to $v_{n+2}$ by $u_{n+2}$.

In other words, the graph $KL_{2k+2}$ consists of the Cartesian product $P_2 \square P_k$ (where $P_t$ denotes a path on $t$ vertices) with two additional vertices $u_{2k+2}$ and $v_{2k+2}$ adjacent to each other, such that $u_{2k+2}$ ($v_{2k+2}$) is adjacent to the two vertices in the first (last, respectively) copy of $P_2$ in $P_2 \square P_k$ (see Figure \ref{fig:kleeLadder}).

Klee ladders can be used to illustrate why we have to exclude Klee-graphs from our main result. For a given Klee ladder $G$ there exists an edge $e$ such that $e$ is contained in a unique perfect matching of $G$, and therefore there is no hope for a statement like Theorem \ref{theorem acyclic2} to be true.

\begin{figure}[ht]
      \centering
      \includegraphics{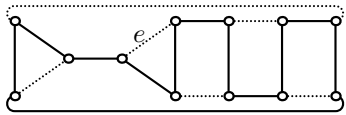}
      \caption{An example of a Klee ladder, $KL_{12}$. {There is a unique perfect matching (here depicted using dotted lines) containing the edge $e$.} The complement of this perfect matching is a Hamiltonian circuit.}
      \label{fig:kleeLadder}
\end{figure}


We will frequently use the following structural property of certain Klee-graphs.

\begin{lemma}\label{lemma 4 circuit klee}
Let $G$ be a Klee-graph on at least 8 vertices having exactly two (disjoint) triangles. Then, 
\begin{enumerate}[label=(\roman*)]
    \item exactly one edge of each triangle lies on a 4-circuit; and 
    \item if $G$ admits an edge joining the two triangles, then $G$ is a Klee ladder.
\end{enumerate}
\end{lemma}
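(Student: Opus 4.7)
The plan is to use the reverse operation: if $G$ is a Klee-graph and $T$ is a triangle of $G$, one can \emph{un-expand} $T$ by contracting the three vertices of $T$ to a single new vertex $v$ and connecting $v$ to the three outside neighbours of $T$. These three outside neighbours are pairwise distinct, since two coinciding ones would yield a triangle sharing two vertices with $T$, contradicting Lemma \ref{lemma 2 disjoint triangles klee}. Hence the un-expansion produces a Klee-graph $G_0$ on $|V(G)|-2 \ge 6$ vertices. Its triangles are precisely the triangle of $G$ different from $T$, together with one triangle $\{v, n_i, n_j\}$ for each edge among the three outside neighbours $n_1, n_2, n_3$ of $T$. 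Combining the hypothesis that $G$ has exactly two triangles with Lemma \ref{lemma 2 disjoint triangles klee} applied to $G_0$ forces $G_0$ to have exactly two disjoint triangles, with $v$ lying in exactly one of them.

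For (i), I would apply this un-expansion separately to each of $T_1$ and $T_2$. Writing $T = \{a,b,c\}$ with $a \sim n_1$, $b \sim n_2$, $c \sim n_3$, a direct local check in the cubic graph $G$ shows that the edge $ab$ lies on a 4-circuit if and only if $n_1 n_2$ is an edge of $G$ (the only candidate 4-circuit is $a\, b\, n_2\, n_1$), and analogously for $bc$ and $ac$. Hence the number of edges of $T$ on a 4-circuit equals the number of edges among $\{n_1, n_2, n_3\}$, which equals the number of triangles of $G_0$ through $v$, which is exactly one.

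For (ii), I would proceed by induction on $|V(G)|$. The base case $|V(G)| = 8$ is immediate, since the unique Klee-graph on $8$ vertices is $KL_8$, a Klee ladder. For the inductive step, let $e = xy$ with $x \in T_1$ and $y \in T_2$, and un-expand $T_1$ to obtain a Klee-graph $G_0$ on at least $8$ vertices with exactly two disjoint triangles, one of which is $T_2$ and the other of which contains $v$. The edge $e$ becomes an edge $vy$ of $G_0$ joining its two triangles, so the inductive hypothesis applies and yields $G_0 = KL_{2k+2}$ for some $k$ with $2k+2 \ge 8$. I would then verify, by inspecting the ladder structure, that for such $k$ the unique edge joining the two triangles of $KL_{2k+2}$ is $u_{2k+2}v_{2k+2}$; hence, after relabelling the two caps if necessary, $v = u_{2k+2}$. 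Since expanding $u_{2k+2}$ in $KL_{2k+2}$ is the defining step producing $KL_{2k+4}$, we conclude $G = KL_{2k+4}$, which is a Klee ladder.

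I expect the main obstacle to be precisely the uniqueness statement about the joining edge in $KL_{2k+2}$ when $2k+2 \ge 8$. This relies on the observation that in the ladder middle $P_2 \square P_k$ the vertices $a_1, a_k$ (and $b_1, b_k$) are no longer adjacent once $k \ge 3$, a feature that fails in $KL_6$, where every rung of the prism joins the two triangles. Everything else is routine bookkeeping combining Lemma \ref{lemma 2 disjoint triangles klee} with the two un-expansion computations above.
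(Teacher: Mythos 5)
Your proposal is essentially the paper's proof run in reverse: the paper inducts via the expansion $G_0\to G$ supplied by the definition of a Klee-graph, while you contract a triangle of $G$ to reach $G_0$; the identification of 4-circuits through a triangle with edges among its three outside neighbours (equivalently, with triangles of $G_0$ through the contracted vertex), and the inductive step for (ii) via the uniqueness of the joining edge $u_{2k+2}v_{2k+2}$ in a Klee ladder on at least $8$ vertices, match the paper's argument. The one assertion you make without justification is that contracting an \emph{arbitrary} triangle of a Klee-graph yields a Klee-graph: the distinctness of the outside neighbours only guarantees a simple cubic graph, and the definition only certifies Klee-ness of the contraction for the triangle created in the last expansion step, whereas your part (i) un-expands \emph{both} triangles. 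The claim is true and follows by an easy induction (a triangle other than the last-expanded one is a triangle of $G_0$ avoiding the expanded vertex, so contract there first and re-expand), but it should be stated; the paper sidesteps the issue by always working with the $G_0$ provided by the definition.
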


\begin{proof}
We prove this by induction. Claim (i) is obvious for $KL_8$, the only Klee-graph on 8 vertices, so let $G$ be a Klee-graph on $n\ge 10$ vertices. By definition, it can be obtained from a smaller one, say $G_0$, by expanding a vertex into a triangle. Since $G$ only has two triangles, this operation must have destroyed a (single) triangle of $G_0$, which in turn gives rise to a 4-circuit containing exactly one of the edges of the new triangle. 

Moreover, if $G$ admits an edge $e$ joining the two triangles, then the corresponding edge $e_0$ in $G_0$ joins a triangle to a vertex contained in a (distinct) triangle, so it joins the two triangles of $G_0$. By induction, $G_0$ is a Klee ladder, say $KL_{n-2}$, for some $n\geq 8$, and the edge $e_0$ is the edge $u_{n-2}v_{n-2}$ (see the definition of Klee ladders above). Claim (ii) follows immediately.
\end{proof}

\section{Proof of Theorem \ref{theorem acyclic2}}

\begin{proof}
Let $G$ be a minimum counterexample to the statement of Theorem \ref{theorem acyclic2}. Since $K_4$ is Klee, $G$ has at least six vertices. There are only two 3-connected cubic graphs on six vertices, namely $KL_6$ and $K_{3,3}$. The former is Klee. For the latter, $K_{3,3}$, a collection of disjoint circuits can only contain one circuit on either four or six vertices and in both cases it is easy to check that every edge is contained in a perfect matching intersecting the prescribed circuit. Therefore, $G$ has at least eight vertices.

Let $e\in E(G)$ be an edge of $G$ such that there exists a collection of disjoint circuits such that for every perfect matching $M$ containing $e$ there is a circuit in the collection containing no edge from $M$. Amongst all such collections, we can choose an inclusion-wise minimal one, denoted by $\mathcal{C}$. 
By the choice of $\mathcal{C}$, we may assume that $e\notin C$ for any $C\in\mathcal{C}$. 

In the sequel, we will prove progressively a series of structural properties of $G$. Before that, we need to define three additional graph families. Let $KL_{2k-2}$ be the Klee ladder on $2k-2$ vertices with $k\ge 4$; let $u_{2k-2}$ and $v_{2k-2}$ be the two vertices contained in the two triangles, say $u_{2k-2}u_1u_2$ and $v_{2k-2}v_1v_2$, which are adjacent to each other. Moreover, we may assume that $KL_{2k-2} \setminus \{u_{2k-2},v_{2k-2}\}$ contains two disjoint paths of length $k-3$, one from $u_1$ to $v_1$ and the other from $u_2$ to $v_2$.

We remove the vertices $u_{2k-2}$ and $v_{2k-2}$ and replace them by four vertices, say $u'_1$, $u'_2$, $v'_1$, and $v'_2$, adjacent to $u_1$, $u_2$, $v_1$, and $v_2$, respectively, and we add a 4-cycle passing through the four new vertices. In fact, we can see the last operation as adding a complete graph on 4 vertices and removing a perfect matching. Up to symmetry, only three outcomes are possible.

\begin{itemize}
    \item A \emph{ladder} $L_{2k}$ is obtained if the edges $u'_1v'_2$ and $u'_2v'_1$ are missing.
    \item A \emph{M\"obius ladder} $ML_{2k}$ is obtained if the edges $u'_1v'_1$ and $u'_2v'_2$ are missing.
    \item A \emph{quasi-ladder} $QL_{2k}$ is obtained if the edges $u'_1u'_2$ and $v'_1v'_2$ are missing.
\end{itemize}
Observe that the ladder $L_{2k}$ is the graph of a $k$-sided prism. Observe that ladders and M\"obius ladders are vertex-transitive.\\

\begin{figure}[ht]
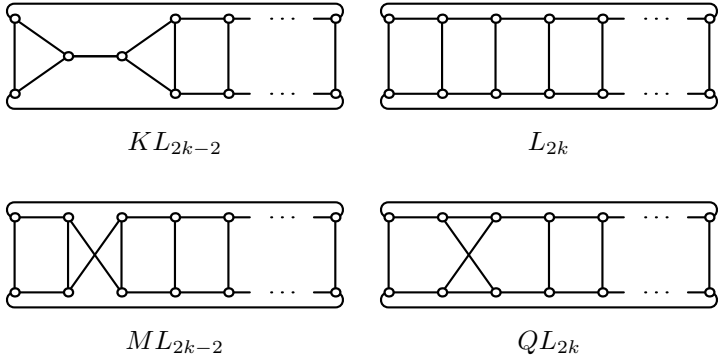

\centering
\begin{tabular}{cc}
     \includegraphics{ladders.0}&\includegraphics{ladders.1}  \\
     $KL_{2k-2}$     &  $L_{2k}$ \\
     & \\
     \includegraphics{ladders.2}&\includegraphics{ladders.3}  \\
     $ML_{2k-2}$     &  $QL_{2k}$ 
\end{tabular}
    \caption{An illustration of a Klee ladder, a ladder, a M\"obius ladder, and a quasi-ladder.}
    \label{fig:ladders}
\end{figure}

\noindent\textbf{Claim 1.} The graph $G$ is not a ladder, a M\"obius ladder, nor a quasi-ladder.

\noindent\emph{Proof of Claim 1.} 
We proceed by contradiction. Suppose that $G\in\{{L}_{2n}, {ML}_{2n}, QL_{2n}: n\geq 4\}$, and let $\mathcal{C}$ be a collection of disjoint circuits in $G$. 
We prove that for every edge $e$ there exists a perfect matching $M_e$ containing $e$ such that its complement is a Hamiltonian circuit, say $C_e$; moreover, there exists yet another perfect matching $M'_e$ containing $e$. The first perfect matching can be used to prove Theorem \ref{theorem acyclic2} unless $\mathcal{C}= \{C_e\}$. If this is the case, then we can use $M'_e$.

In most of the cases, the second perfect matching $M'_e$ can be obtained from $M_e$ by the following operation: we find a 4-circuit consisting of the edges $e_1, e_2, e_3, e_4$ (in this cyclic order) avoiding $e$ and containing exactly two edges from $M_e$, say $e_1$ and $e_3$. We then set $M'_e=(M_e \setminus \{e_1,e_3\}) \cup \{e_2,e_4\}$. In other words, $M'_e$ is obtained as the symmetric difference of $M_e$ and a suitable 4-circuit.

If $G$ is a ladder or a M\"obius ladder, then $G$ is vertex-transitive, and there are only two edge orbits. It suffices to distinguish between $e$ being an edge contained in two 4-circuits (vertical according to Figure \ref{fig:ladders}) or in a single one (horizontal or diagonal). An example of a pair of perfect matchings $M_e$ and $M'_e$ having the desired properties is depicted in Figure \ref{fig:ladder}.

\begin{figure}[ht]
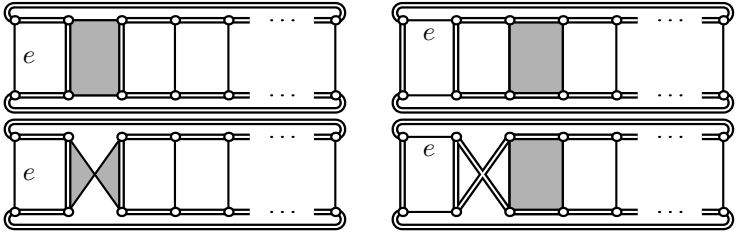

    \centering
    \includegraphics{ladders.10} $\quad$
    \includegraphics{ladders.11}
    \includegraphics{ladders.12} $\quad$
    \includegraphics{ladders.13}
    \caption{An example of a Hamiltonian circuit $C_e$ (drawn using double lines) avoiding a given edge $e$ whose complement is a perfect matching $M_e$ containing $e$, for both possible positions of the prescribed edge $e$ in a ladder (top line) or a M\"obius ladder (bottom line). A second perfect matching $M'_e$ can be obtained by the symmetric difference with the grey 4-circuit.}
    \label{fig:ladder}
\end{figure}

Let $G=QL_{2k}$ for some $k\ge 4$. If $e$ is an edge of the subgraph $P_2\square P_{k-2}$ or an edge of the 4-circuit $u'_1v'_1u'_2v'_2$ (see the definition of a quasi-ladder for the notation), then a pair of perfect matchings $M_e$ and $M'_e$ having the desired properties can be found in a same way as in the previous case, see Figure \ref{fig:quasiladder} for an illustration. 

Otherwise, let $e=u_1u'_1$ (for the remaining three edges the situation is symmetric). There is a unique Hamiltonian circuit $C_e$ avoiding $e$ and containing $u_2u'_2$, see Figure \ref{fig:quasiladder2} for an illustration. In this case, there is another perfect matching $M'_e$ containing $\{u_1u'_1,u_2u'_2,v_1v'_1,v_2v'_2\}$ and all the vertical edges of the subgraph $P_2\square P_{k-2}$ except for the first and the last one. \hfill {\tiny$\blacksquare$}\\

\begin{figure}[ht]
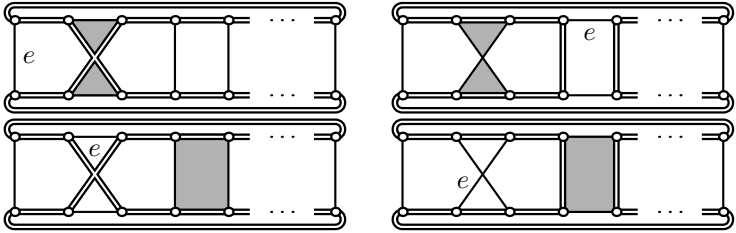

    \centering
    \includegraphics{ladders.30} $\quad$
    \includegraphics{ladders.31}
    \includegraphics{ladders.32} $\quad$
    \includegraphics{ladders.33}
    \caption{An example of a Hamiltonian circuit $C_e$ avoiding a given edge $e$ (drawn using double lines) whose complement is a perfect matching $M_e$ containing $e$, for an edge $e$ contained in the grid $P_2\square P_{k-2}$ (top line) and in the 4-circuit outside the grid (bottom line) of a quasi-ladder. A second perfect matching $M'_e$ can be obtained by the symmetric difference with the grey 4-circuit. }
    \label{fig:quasiladder}
\end{figure}

\begin{figure}[ht]
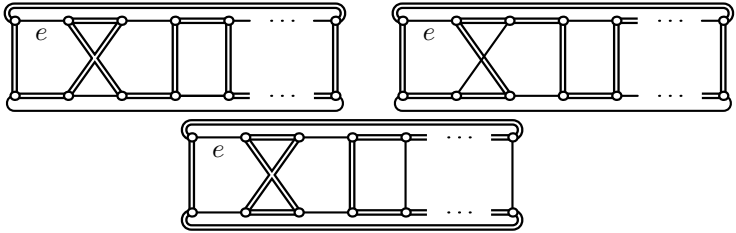

    \centering
    \includegraphics{ladders.34} $\quad$
    \includegraphics{ladders.35}
    \includegraphics{ladders.36} $\quad$
    
    \caption{An example of a Hamiltonian circuit $C_e$ avoiding a given edge $e$ (drawn using double lines) whose complement is a perfect matching $M_e$ containing $e$, for an edge $e$ joining a vertex in the grid $P_2\square P_{k-2}$ to a vertex of the 4-circuit outside the grid in a quasi-ladder (top line, two cases depending on the parity of the length of the grid). A second perfect matching $M'_e$ (bottom). }
    \label{fig:quasiladder2}
\end{figure}

\noindent\textbf{Claim 2.} The graph $G$ does not have any cyclic 3-edge-cuts.

\noindent\emph{Proof of Claim 2.} Suppose that $G$ admits a cyclic 3-edge-cut $E(V',V^{\prime\prime})$ with $E(V',V'')=\{f_{1}, f_{2}, f_{3}\}=:X$, where each $f_{i}=v'_{i}v''_{i}$, for some $v'_{1},v'_{2},v'_{3}\in V'$ and $ v''_{1},v''_{2}, v''_{3} \in V''$. Since $G$ has no 2-edge-cuts, the vertices $v'_{1},v'_{2},v'_{3},  v''_{1},v''_{2}, v''_{3}$ are all  distinct.

Either there is no circuit in $\mathcal{C}$ intersecting $X$, or the cut $X$ is intersected by a unique circuit $C_X$ in $\mathcal{C}$. Without loss of generality, we shall assume that when $C_X$ exists, $X\cap C_X=\{f_{2},f_3\}$.

Let $G'$ and $G''$ be the two graphs obtained from $G$ after deleting $X$ and joining the vertices $v'_{i}$ to a new vertex $v'$, and the vertices $v''_{i}$ to a new vertex $v''$. For each $i\in[3]$, let $e'_{i}=v'_{i}v'$ and $e''_{i}=v''_{i}v''$. 

\begin{figure}[ht]
      \centering
      \includegraphics[width=0.8\textwidth]{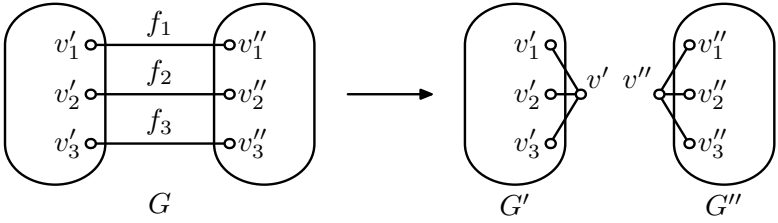}
      \caption{The graphs $G'$ and $G''$ when $G$ admits a cyclic 3-edge-cut $\{f_1,f_2,f_3\}$.}
      \label{figure 3cut}
\end{figure}

Let

\[\mathcal{C}' =
  \begin{cases}
  \{C\in \mathcal{C}\setminus \{C_X\} : C\cap E(G')\ne \emptyset\} \cup \{(C_X\cap E(G'))\cup\{e'_2,e'_3\}\}& $if $C_X$ exists,$\\
  \{C\in \mathcal{C} : C\cap E(G')\ne \emptyset\}  & $otherwise.$
  \end{cases}\]

Similarly, let
\[\mathcal{C}'' =
  \begin{cases}
  \{C\in \mathcal{C}\setminus \{C_X\} : C\cap E(G'')\ne \emptyset\} \cup \{(C_X\cap E(G''))\cup\{e''_2,e''_3\}\}& $if $C_X$ exists,$\\
  \{C\in \mathcal{C} : C\cap E(G'')\ne \emptyset\}  & $otherwise.$
  \end{cases}\]

It is not hard to see that $\mathcal{C}'$ ($\mathcal{C}''$) is a collection of disjoint circuits in $G'$ (in $G''$, respectively). Every circuit $C\ne C_X$ in $\mathcal{C}$ corresponds to a circuit either in $\mathcal{C}'$ or in $\mathcal{C}''$. The circuit $C_X$ (if it exists) corresponds to two circuits $C'_X$ and $C''_X$ in $\mathcal{C}'$ and $\mathcal{C}''$, respectively.

\textbf{Case A.} We first consider the case when $G$ does not admit any triangles, and claim that $G'$ (similarly $G''$) is not Klee. For, suppose that $G'$ is Klee. Since $G$ has no triangles, $|V(G')|\geq 6$, and so, by Lemma \ref{lemma 2 disjoint triangles klee}, $G'$ must admit two disjoint triangles. This is impossible since any triangle in $G'$ must contain the vertex $v'$. Hence, when $G$ does not admit any triangles, $G'$ and $G''$ are both not Klee.

Without loss of generality, we can also assume that at least one of the endvertices of $e$ corresponds to a vertex in $V'$. 
We consider two cases, depending on the existence of $C_X$. 

\emph{Case A1.} First, consider the case when $C_X$ does not exist. When $e\in X$, say $e=f_{1}$, then, by minimality of $G$, there exists a perfect matching $M'$ of $G'$ ($M''$ of $G''$) containing $e'_{1}$ ($e''_{1}$), intersecting every circuit in $\mathcal{C}'$ (in $\mathcal{C}''$, respectively). Consequently, $M=M'\cup M''\cup \{f_{1}\}\setminus\{e'_{1},e''_{1}\}$ is a perfect matching of $G$ containing $e=f_1$, intersecting every circuit in $\mathcal{C}$.

It remains to consider the case when $e\notin X$, and so the endvertices of $e$ both correspond to vertices in $G'$. Once again, for simplicity, we shall refer to this edge as $e$. Let $M'$ be a perfect matching of $G'$ containing $e$ intersecting every circuit in $\mathcal{C}'$. Without loss of generality, assume that $e'_{1}\in M'$. Let $M''$ be a perfect matching of $G''$ containing $e''_{1}$ intersecting every circuit in $\mathcal{C}''$. Let $M=M'\cup M''\cup \{f_{1}\}\setminus \{e'_{1},e''_{1}\}$. This is a perfect matching of $G$ containing $e$, intersecting every circuit in $\mathcal{C}$, a contradiction.

\emph{Case A2.} Suppose that $C_X$ exists. 
When $e\in X$, we have that $e=f_1$ by the choice of $C_X$, and so, by the minimality of $G$, there exists a perfect matching $M'$ of $G'$ ($M''$ of $G''$) containing $e'_{1}$ ($e''_{1}$), intersecting every circuit in $\mathcal{C}'$ (in $\mathcal{C}''$, respectively). Consequently, $M=M'\cup M''\cup \{f_{1}\}\setminus\{e'_{1},e''_{1}\}$ is a perfect matching of $G$ containing $e=f_1$.
Clearly, every circuit in $\mathcal{C}\setminus \{C_X\}$ is intersected by $M$. The circuit $C_X$ must be intersected by $M$ since $C'_X$ ($C''_X$) contains an edge of $M'$ ($M''$), not incident to $v'$ ($v''$, respectively). 

When $e\notin X$, the endvertices of $e$ both correspond to vertices in $G'$. Once again, for simplicity, we shall refer to this edge as $e$. Let $M'$ be a perfect matching of $G'$ containing $e$  intersecting every circuit in $\mathcal{C}'$. We have $e'_{i}\in M'$ for some $i\in[3]$. Let $M''$ be a perfect matching of $G''$ containing $e''_{i}$ intersecting every circuit in $\mathcal{C}''$. Let $M=M'\cup M''\cup \{f_{i}\}\setminus \{e'_{i},e''_{i}\}$. This is a perfect matching of $G$ containing $e$. As before, $M$ intersects every circuit in $\mathcal{C}$ unless $i=1$ and no edge of $G'$ or $G''$ corresponding to an edge of $C_X$ is in $M'$ or $M''$, which is impossible since $C'_X$ ($C''_X$) is a circuit in $\mathcal{C}'$ ($\mathcal{C}''$), so it contains an edge of $M'$ ( $M''$), not incident to $v'$ ($v''$, respectively).

\textbf{Case B.} What remains to be considered is the case when $G$ admits a triangle. Consequently, without loss of generality, we can assume that $G''$ is $K_4$. We note that in this case, $G'$ cannot be Klee because otherwise $G$ itself would be Klee. Thus, the inductive hypothesis can only be applied to $G'$ but not to $G''$. As in Case A, we can assume that at least one of the endvertices of $e$ corresponds to a vertex in $V'$, since if the endvertices of $e$ both belong to $V''$, say $e=v''_{i}v''_{j}$, a perfect matching of $G$ contains $e$ if and only if it contains $f_k$, where $\{i,j,k\}=[3]$.  We proceed as in Case A and note that the perfect matching $M'$ containing (the edge corresponding to) $e$ intersecting every circuit in $\mathcal{C}'$ obtained after applying the inductive hypothesis to $G'$ can be easily extended to a perfect matching $M$ of $G$ containing $e$. What remains to show is that $M$ intersects every circuit in $\mathcal{C}$. The only circuit possibly not intersected by $M$ is $C_X$, if it exists. However, this can only happen if $i=1$, and, if this is the case, then, in particular, $C'_X$ is a circuit in $G'$ and so contains an edge of $M'$ not incident to $v'$. This implies that $C_X$ contains the corresponding edge of $M$ in $G$, a contradiction.\hfill {\tiny$\blacksquare$}\\

\noindent\textbf{Claim 3.} The graph $G$ does not have any cyclic 4-edge-cuts.

\noindent\emph{Proof of Claim 3.} 
Suppose first that, in particular, $G$ has a 4-circuit $C=(v''_1,v''_2,v''_3,v''_4)$. Let $v'_1, v'_2, v'_3, v'_4$ be the vertices in $G-C$ respectively adjacent to $v''_1,v''_2,v''_3,v''_4$, let $f_i=v'_iv''_i$ for $i\in\{1,2,3,4\}$ and let $X = \{f_1, f_2, f_3, f_4\}$. The vertices $v'_i$ are pairwise distinct since $G$ does not have any cyclic 3-cuts. 

Let $\{i,j,k\}=\{2,3,4\}$. We denote by $G_{1i}$ the graph obtained after adding two new vertices $x$ and $y$ to $G-C$, such that:
\begin{itemize}
    \item $x$ and $y$ are adjacent; 
    \item $v'_1$ and $v'_i$ are adjacent to $x$; and
    \item $v'_j$ and $v'_k$ are adjacent to $y$.
\end{itemize} 

It is known that the graph $G_{1i}$ is 3-connected whenever $G$ is cyclically 4-edge-connected \cite{EKK}.
We claim that $G_{1i}$ is not Klee, for any $i\in\{2,3,4\}$. For, suppose not. Since $G$ does not admit any cyclic 3-cuts, by Lemma \ref{lemma 2 disjoint triangles klee}, the only two possible triangles in $G_{1i}$ are $(v'_1,v'_i,x)$ and $(v'_j,v'_k,y)$. Moreover, since $x$ is adjacent to $y$, by Lemma \ref{lemma 4 circuit klee}, $G_{1i}$ is a Klee ladder. For every $i\in\{2,3,4\}$, this implies that $G$ is a graph isomorphic to a ladder, a M\"obius ladder, or a quasi-ladder  --- this is a contradiction. 

We proceed by considering whether $e$ belongs to $E(C)$, $X$, or $E(G-C)$.

\textbf{Case A.} When $e\in E(C)$, then for every $i\in\{2,3,4\}$, every perfect matching of $G_{1i}$ containing $e'=xy$ extends to a perfect matching of $G$ containing $e$. The cut $X$ contains an even number of edges belonging to some circuit in $\mathcal{C}$. In particular, $E(C)$ can contain at most three circuit edges belonging to some circuit in $\mathcal{C}$, and so $C\not\in\mathcal{C}$.

If $X$ contains no circuit edges, then we can set $\mathcal{C}'=\mathcal{C}$ and apply induction on any $G'=G_{1i}$ to find a perfect matching $M'$ containing $e'$ intersecting every circuit in $\mathcal{C}'$, which readily extends to a perfect matching $M$ containing $e$ intersecting every circuit in $\mathcal{C}$. Note that the circuit $C$, in particular, is always intersected by at least one edge of $M$.

If there is a single circuit intersecting $X$ exactly twice, say $C_X$ passing through the edges $f_j$ and $f_k$, then we apply induction on the graph $G'=G_{1i}$ where $\{1,i\}=\{j,k\}$ (if $1\in\{j,k\}$), or $|\{1,i,j,k\}|=4$ (otherwise). The circuit $C'_X$ in $G'$ corresponding to $C_X$ contains two edges both incident to either $x$ or $y$. Hence, if a perfect matching $M'$ containing $e'=xy$ intersects every circuit in $\mathcal{C'}=(\mathcal{C}\setminus \{C_X\}) \cup \{C_X'\}$, then it extends to a perfect matching containing $e$ intersecting every circuit in $\mathcal{C}$, since $C'_X$ contains an edge in $M'$ not incident to $x$ (nor $y$).

If there are two distinct circuits each intersecting $X$ twice, say $C_X$ passing through the edges $f_1$ and $f_2$, and $D_X$ passing through the edges $f_3$ and $f_4$, or if there is a single circuit intersecting $X$ four times, say $C_X$ passing through the vertices $v'_1,v''_1,v''_2,v'_2$,  and also $v'_3,v''_3,v''_4,v'_4$, then we can apply induction on the graph $G_{12}$ with $e'=xy$ just like in the previous case.

\textbf{Case B.} When $e\in X$, say $e=f_1$, then every perfect matching of $G'=G_{13}$ containing $e'=xu_1$ extends to a perfect matching of $G$ containing $e$ in a unique way. If there is no circuit in $\mathcal{C}$ intersecting $X$, then we can set $\mathcal{C}'=\mathcal{C}\setminus \{C\}$ and apply induction directly. If there is a circuit in $\mathcal{C}$ intersecting $X$, say $C_X$, then $|C_X\cap X|=2$. The corresponding circuit $C'_X$ in $G'$ is well-defined: it always contains $y$ and eventually also $x$ (when $C_X\cap X\neq\{f_2,f_4\}$). A perfect matching $M'$ in $G'$ containing $e'$ intersecting every circuit in $\mathcal{C'}=(\mathcal{C} \setminus \{C_X\})\cup \{C'_X\}$ intersects $C'_X$ at a cut-edge incident to $y$ or an edge of $G-C$. In both cases, the corresponding perfect matching $M$ in $G$ containing $e$ intersects every circuit in $\mathcal{C}$, since $M$ intersects $C_X$ at an edge in $X$ or an edge of $G-C$.  

\textbf{Case C.} It remains to consider the case when $e\in E(G-C)$. Let $G'=G_{13}$ and let $e'$ be the edge of $G'$ corresponding to $e$ in $G$. Every perfect matching $M'$ of $G'$ containing $e'$ and not containing $xy$ extends to a perfect matching $M$ of $G$ containing $e$ in a unique way; every perfect matching $M'$ of $G'$ containing $e'$ and $xy$ extends to a perfect matching $M$ of $G$ in two distinct ways, whose symmetric difference is the 4-circuit $C$. In all the cases, we obtain a perfect matching $M$ of $G$ containing at least one edge of $C$.

If $X$ contains no edges belonging to any circuit in $\mathcal{C}$, then we can set $\mathcal{C}'=\mathcal{C}\setminus \{C\}$ and apply induction directly. The circuit $C$ in particular (if it is in $\mathcal{C}$) is always intersected by at least one edge of $M$.

If there is a single circuit intersecting $X$ exactly twice, say $C_X$, passing through the edge $f_1$ and $f_i$ for some $i\in\{2,3,4\}$, then the corresponding circuit $C'_X$ in $G'$ is well-defined: it always contains $x$ and eventually also $y$ (when $C_X\cap X\neq\{f_1,f_3\}$). We can set $\mathcal{C}'=(\mathcal{C}\setminus \{C_X\})\cup \{C'_X\}$ and apply induction. If $M'$ contains an edge of $C'_X$ not incident to $x$ nor $y$, then $M$ contains an edge of $C_X$ not incident to any vertex of $C$. If $M'$ contains the edge $xy$, then  amongst the two possible extensions of $M'$ into $M$ we can always choose one that contains at least one edge of $C_X$. If $M'$ contains an edge incident to $x$ or to $y$ distinct from $xy$, then $M$ contains the corresponding edge in $X$. In all the cases, it is possible to extend a perfect matching $M'$ of $G'$ containing $e'$ and intersecting every circuit in $\mathcal{C}'$ into a perfect matching $M$ of $G$ containing $e$ and intersecting every circuit in $\mathcal{C}$. 

If there are two distinct circuits each intersecting $X$ twice, say $C_X$ passing through the edges $f_1$ and $f_2$ and $D_X$ passing through the edges $f_3$ and $f_4$, then we apply induction on $G'$ with $\mathcal{C}'=\mathcal{C}\setminus \{C_X,D_X\}$. If the perfect matching $M'$ containing $e'$ and intersecting every circuit in $\mathcal{C}'$ obtained by induction also contains $xy$, then we can choose $M$ to contain both $v''_1v''_2$ and $v''_3v''_4$, and so it intersects both $C_X$ and $D_X$ as well. If $M'$ does not contain $xy$, then $|M\cap \{f_1,f_2,f_3,f_4\}|=2$. If $M$ contains exactly one of $f_1$ and $f_2$ then it also contains one of $f_3$ and $f_4$, and so $M$ intersects both $C_X$ and $D_X$. If $\{f_1,f_2\}\subset M$, then $v''_3v''_4\in M$; similarly, if $\{f_3,f_4\}\subset M$, then $v''_1v''_2\in M$. In all the cases $M$ intersects both $C_X$ and $D_X$, as desired.

If there is a single circuit intersecting $X$ four times, say $C_X$ passing through $v'_1v''_1v''_2v'_2$ and also $v'_3v''_3v''_4v'_4$, then we can apply induction on the graph $G'$ with $\mathcal{C}'=\mathcal{C}\setminus \{C_X\}$ just like in the previous case.

%

\medskip 
From this point on we may assume that $G$ does not contain any $4$-circuits. In particular, for every cyclic $4$-edge-cut $E(V',V'')$ both sides have at least six vertices.
Suppose that $G$ admits a cyclic 4-edge-cut $E(V',V^{\prime\prime})$ with $E(V',V'')=\{f_{1}, f_{2}, f_{3}, f_{4}\} =:X$, where each $f_{i}=v'_{i}v''_{i}$, for some $v'_{1},v'_{2},v'_{3},v'_{4}\in V'$ and $ v''_{1},v''_{2}, v''_{3}, v''_{4}\in V''$. Since $G$ has no 3-edge-cuts, the vertices $v'_{1},v'_{2},v'_{3},v'_{4},  v''_{1},v''_{2}, v''_{3},v''_{4}$ are all  distinct.

We define graphs $G'_{1i}$ and $G''_{1i}$ for $i\in \{2,3,4\}$ analogously as in the previous part. We denote by $x'$ and $y'$ ($x''$ and $y''$) the two new vertices in $G'_{1i}$ (in $G''_{1i}$), and by $e'_1$, $e'_2$, $e'_3$, $e'_4$ ($e''_1$, $e''_2$, $e''_3$, $e''_4$) the edges of $G'_{1i}$ (of $G''_{1i}$, respectively) corresponding to $f_1$, $f_2$, $f_3$, $f_4$, respectively, for $i\in\{2,3,4\}$. These graphs are all 3-connected \cite{EKK}. None of these graphs can be a Klee-graph: if this was the case, it would have to be a Klee ladder on at least eight vertices, but there are no 4-circuits at all in $G$, so this is impossible.

Consider first the case when $e\in X$, say $e=f_1$. If there is a circuit $C_X$ in $\mathcal{C}$ intersecting $X$, then $e\notin C_X$ and $|C_X\cap X|=2$. We may assume that $C_X\cap X=\{f_2,f_3\}$.
We consider all the three graphs $G'_{12}$, $G'_{13}$, and $G'_{14}$ (and all the three graphs $G''_{12}$, $G''_{13}$, and $G''_{14}$) at the same time. The circuit $C_X$ (if it exists) corresponds to a circuit $C'_X$ ($C''_X$) in each of them in a natural way, covering either one or two vertices  amongst $x'$ and $y'$ ($x''$ and $y''$, respectively). If $C_X$ does not exist, we shall proceed in the same manner, but letting $C_X$, $C_X'$, and $C_X''$ be equal to $\emptyset$. We apply induction with $e'=e'_1$ ($e''=e''_1$) and $\mathcal{C}'=((\mathcal{C}\setminus \{C_X\})\cap E(G'_{1i})) \cup \{C_X'\}$ ($\mathcal{C}''=((\mathcal{C}\setminus \{C_X\})\cap E(G''_{1i}))\cup \{C_X''\}$, respectively). Let $M'_i$ ($M''_i$) be a perfect matching in $G'_{1i}$ ($G''_{1i}$) containing $e'$ ($e''$) intersecting every circuit in $\mathcal{C'}$ (in $\mathcal{C''}$, respectively). Every perfect matching  amongst $M'_2$, $M'_3$, and $M'_4$ contains exactly one edge $e'_k$ corresponding to a cut edge $f_k$ for some $k\in\{2,3,4\}$ (besides the edge $e'$ corresponding to $f_1$) and the three values of $k$ cannot all be the same for the three perfect matchings. The same thing holds for the other three perfect matchings $M''_2$, $M''_3$, and $M''_4$. Therefore, for some $k\in\{2,3,4\}$ there exist two perfect matchings $M'_i$ and $M''_j$ containing the edge $e'_k$ and $e''_k$, respectively. We can combine them together into a perfect matching $M$ containing $e$ and $f_k$, intersecting every circuit in $\mathcal{C}$. In particular, if $C_X$ exists, then it can only be avoided by $M$ if $k=4$, but then $M'_i$ ($M''_j$) cannot contain any edge of $C'_X$ ($C''_X$) incident to $x'$ or to $y'$ (to $x''$ or to $y''$), so it intersects $C'_X$ inside $G[V']$ ($C''_X$ inside $G[V'']$, respectively). Consequently, $M$ intersects $C_X$ inside $G[V']$ and $G[V'']$. 

Consider next the case where $e\notin X$. We may assume that $e\in G[V']$. 
Let $\mathcal{C}_X$ be the set of circuits in $\mathcal{C}$ intersecting $X$. We have $|\mathcal{C}_X|\le 2$, and even if there is a single circuit in $\mathcal{C}_X$, it may contain all four edges of $X$.
Let $\mathcal{C}'_0$ ($\mathcal{C}''_0$) be the set of circuits from $\mathcal{C}$ within $G[V']$ ($G[V'']$, respectively). 
Given $G'=G'_{1i}$ ($G''=G''_{1i}$) for some arbitrary $i\in\{2,3,4\}$, let $\mathcal{C}'_X$ ($\mathcal{C}''_X)$ be the set of circuits obtained from the subpaths of circuits in $\mathcal{C}_X$ contained in $G[V']$ (in $G[V'']$) by adding the necessary edges from $\{e'_1,e'_2,e'_3,e'_4\}$ (from $\{e''_1,e''_2,e''_3,e''_4\}$) and eventually also the edge $x'y'$ ($x''y''$, respectively), if needed. Observe that $|\mathcal{C}'_X|=2$ ($|\mathcal{C}''_X|=2$) is possible when $|\mathcal{C}_X|=1$, and vice-versa. Finally, let $\mathcal{C}'=\mathcal{C}'_0 \cup \mathcal{C}'_X$ and $\mathcal{C}''=\mathcal{C}''_0 \cup \mathcal{C}''_X$.

Let $e'$ be the edge in $G'$ corresponding to $e$ in $G$. By induction, we obtain a perfect matching $M'$ containing $e'$ intersecting every circuit in $\mathcal{C}'$. 

Consider first the case when $x'y'\in M'$. We apply induction to obtain a perfect matching $M''$ of $G''=G_{1i}$, for any $i\in\{2,3,4\}$, containing $x''y''$ intersecting every circuit in $\mathcal{C}''$. Then, $M=(M' \setminus \{x'y'\})\cup (M''\setminus \{x''y''\})$ is a perfect matching of $G$ containing $e$. It is easy to check that $M$ intersects every circuit in $\mathcal{C}'_0$ and in $\mathcal{C}''_0$; it remains to certify that $M$ intersects all the circuits in $\mathcal{C}_X$. If $|\mathcal{C}_X|\le 1$, then we choose $G''_{1i}$ in such a way that $x''y''$ does not belong to any circuit in $\mathcal{C}''_X$, and so $M''$ 
 contains at least one edge (not incident to $x''$ or $y''$) of every circuit in $\mathcal{C}''_X$, and so the circuit in $\mathcal{C}_X$ will contain at least one edge from $M$.
 If $|\mathcal{C}_X|=2$, then it suffices to choose $G''_{1i}$ in such a way that $\mathcal{C}''_X$ contains two distinct circuits (avoiding $x''y''$), and then each of them will contain at least one edge (not incident to $x''$ or $y''$) from $M''$, and thus each circuit in $\mathcal{C}_X$ will contain at least one edge from $M$, as desired.

It remains to consider the case when for every choice of $G'=G'_{1i}$, a perfect matching $M'_i$ of $G'$ containing $e'$ and intersecting every circuit in $\mathcal{C}'$ never contains the edge $x'y'$. 
 Without loss of generality, we may assume that for $G'_{12}$ the perfect matching $M'_2$ contains the edges $e'_1$ and $e'_3$. We then consider $G'_{13}$. Again, without loss of generality, the perfect matching $M'_3$ contains the edges $e'_1$ and $e'_2$. Finally, we apply induction on  $G''=G''_{14}$ with $e''=e''_1$. Every perfect matching $M''$ of $G''$ containing $e''$ contains either $e''_2$ or $e''_3$, so it can be combined with either $M'_2$ or $M'_3$ to give a perfect matching $M$ of $G$ containing $e$. We may assume that $e''_2\in M''$. 
 It is easy to check that such a perfect matching $M$ intersects all the circuits in $\mathcal{C}'_0$ and in $\mathcal{C}''_0$;  it remains to certify that $M$ intersects all the circuits in $\mathcal{C}_X$. The only circuit from $\mathcal{C}_X$ potentially not intersected by $M$ is the one containing the edges $f_3$ and $f_4$, say $C_X$. However, the corresponding circuits $C'_X$ and $C''_X$ in $\mathcal{C}'_X$ and $\mathcal{C}''_X$ (there is exactly one on each side) respectively contain an edge of $M'_3$ (not incident to $x'$ or $y'$) and an edge of $M''$ (not incident to $x''$ or $y''$). Therefore, $M$ intersects $C_X$ at least twice, which is more than what is desired. 
\hfill {\tiny$\blacksquare$}\\

From this point on we may assume that $G$ is cyclically 5-edge-connected. We now consider the edges at distance 2 from $e$ (distance measured as the length of a shortest path joining corresponding vertices in the line graph of $G$).\\

\noindent\textbf{Claim 4.} Let $f$ be an edge at distance 2 from $e$. Then, $f\notin C$ for any $C\in \mathcal{C}$. 

\noindent\emph{Proof of Claim 4.} 
We will use a procedure that transforms a cubic graph $G$ into a cubic graph $G'$ smaller than $G$, such that every perfect matching of $G'$ containing a certain edge can be extended into a perfect matching of $G$ containing the corresponding edge. 

This operation was already used by Voorhoeve \cite{voorhoeve} to study perfect matchings in bipartite cubic graphs and it is one of the main tools used for counting perfect matchings in general in \cite{EsperetLovaszPlummer}. This technique is also used by the three authors in \cite{quelling1} to prove Theorem \ref{theorem quelling 1}.

Let $f=uv$, let the neighbours of $u$ distinct from $v$ be $\alpha$ and $\gamma$, and let the neighbours of $v$ distinct from $u$ be $\beta$ and $\delta$. 
In particular, since $G$ is cyclically 5-edge-connected, these four vertices are all distinct and non-adjacent. Without loss of generality, we may assume that $\alpha$ is an endvertex of $e$.

\begin{figure}[ht]
      \centering
      \includegraphics[width=0.5\textwidth]{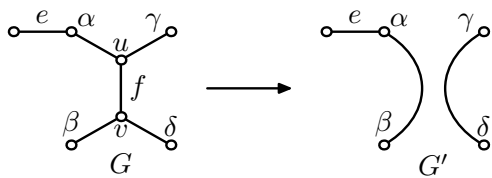}
      \caption{The vertices $\alpha,\beta, \gamma, \delta$ and an $(\alpha\beta:\gamma\delta)_{uv}$-reduction.}
      \label{figure reduction1}
\end{figure}

As shown in Figure \ref{figure reduction1}, we obtain a smaller graph by deleting the endvertices of $f$ (together with all edges incident to them) and adding the edges $\alpha\beta$ and $\gamma\delta$. Let this resulting graph be $G'$. We shall say that $G'$ is obtained after an $(\alpha\beta:\gamma\delta)_{uv}$-reduction. It is well-known that when applying this operation, the cyclic edge-connectivity of a cubic graph can drop by at most 2. Since $G$ is cyclically 5-edge-connected, $G'$ is cyclically 3-edge-connected. 

Let the edge in $G'$ corresponding to $e$, and the vertices in $G'$ corresponding to $\alpha,\beta, \gamma, \delta$ be denoted by the same name. We recall that any perfect matching of $G'$ which contains $e$ can be extended to a perfect matching of $G$ containing the edge $e$ (see also Figure \ref{figure reduction11}). In fact, let $M'$ be a perfect matching of $G'$ containing $e$. This is extended to a perfect matching $M$ of $G$ containing $e$ as follows:

\[M =
  \begin{cases}
  M'\cup\{u\gamma, v\delta\}\setminus \{\gamma\delta\}  & $if $\gamma\delta\in M'$,$ \\
  M'\cup \{f\} & $otherwise.$ 
  \end{cases}\]

\begin{figure}[ht]
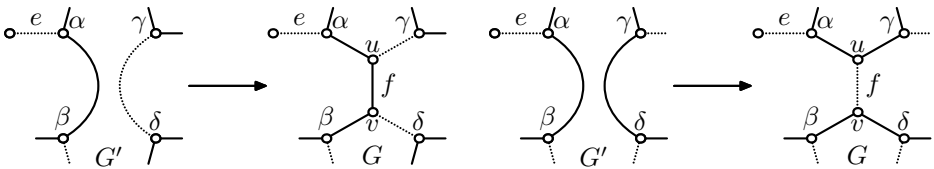

      \centerline{\hfill
      \includegraphics{reduction.2}
      \hfill\hfill
      \includegraphics{reduction.3}
      \hfill}
      \caption{Extending a perfect matching of $G'$ containing $e$ to a perfect matching of $G$ containing $e$. Dotted lines represent edges in $M$ or $M'$.} \label{figure reduction11}
\end{figure}

Suppose that for some edge $f$ at distance 2 from $e$, $f$ is in some circuit $C_f$ in $\mathcal{C}$. This means that exactly one of $u\alpha$ and $u\gamma$, and exactly one of $v\beta$ and $v\delta$ belong to $C_f$. Without loss of generality, we may assume that $u\alpha\in C_f$ if and only if $v\beta \in C_f$  (otherwise, we rename $\beta$ and $\delta$). Let $G'$ be the graph obtained from $G$ after an $(\alpha\beta:\gamma\delta)_{uv}$-reduction. 
Let $C'_f$ be the circuit in $G'$ corresponding to $C_f$ in $G$ obtained by replacing the 3-edge path passing through $u$ and $v$ by a single edge. Since $G$ is of girth 5, $C_f$ is a circuit of length at least 3.
Let $\mathcal{C}' = (\mathcal{C}\setminus\{C_f\})\cup\{C'_f\}$ be the collection of disjoint circuits of $G'$ obtained by this reduction. This is portrayed in Figure \ref{figure casei}.

\begin{figure}[ht]
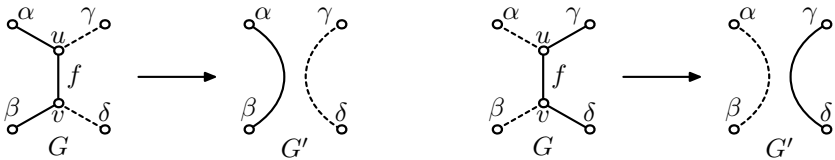

      \centering
      \hfill
      \includegraphics{reduction.4}\hfill\hfill
      \includegraphics{reduction.5}\hfill{}
            \caption{If $f\in C_f$, then we apply induction on $G'$ --- the graph obtained from $G$ after an $(\alpha\beta:\gamma\delta)_{uv}$-reduction. Dashed lines represent edges outside $\mathcal{C}$ or $\mathcal{C}'$, respectively.}
      \label{figure casei}
\end{figure}

Let us first assume that $G'$ is not a Klee-graph. Since $G'$ is cyclically 3-edge-connected and its order is strictly less than $G$, it is not a counterexample. Let $M'$ be a perfect matching of $G'$ containing $e$ intersecting all the circuits in $\mathcal{C}'$. We extend this perfect matching to a perfect matching $M$ of $G$ containing $e$ as described above (see Figure \ref{figure reduction11}), and claim that it intersects all the circuits in $\mathcal{C}$. Every circuit $C'\ne C'_f$ in $\mathcal{C'}$ is hit by an edge of $M'$ in $G'$, and so the corresponding circuit $C$ is hit by the corresponding edge of $M$ in $G$. The circuit $C'_f$ is hit by an edge $M'$ in $G'$, and so the corresponding circuit $C_f$ is hit by the corresponding edge in $G$, unless $\gamma\delta\in E(C_f)$ and the hitting edge is  $\gamma\delta$, but then $C_f$ is hit by both edges $\gamma u$ and $v\delta$. Observe that $M'$ cannot contain $\alpha\beta$ because $e\in M'$.

Therefore, $G'$ must be Klee. Since $G'$ is obtained after an $(\alpha\beta:\gamma\delta)_{uv}$-reduction, and $G$ is cyclically 5-edge-connected, by Lemma \ref{lemma 2 disjoint triangles klee}, the graph $G'$ admits exactly two (disjoint) triangles $T_{\ell}$ and $T_{r}$ such that $V(T_{\ell})=\{v_{\ell},\alpha,\beta\}$ and $V(T_{r})=\{v_{r},\gamma,\delta\}$, for some $v_{\ell}$ and $v_{r}$ in $G'$. Let $a,b,c,d$ be the vertices in $G'-\{v_{\ell},v_{r}, \alpha,\beta, \gamma,\delta\}$ which are adjacent to $\alpha,\beta,\gamma,\delta$, respectively (see Figure \ref{fig:toPetersen}). Furthermore, since $G$ is cyclically 5-edge-connected, by Lemma \ref{lemma 4 circuit klee} the edge $\alpha\beta$ ($\gamma\delta$) is the only edge in $T_{\ell}$ (in $T_{r}$) which lies on a 4-circuit. Therefore, $(\alpha,\beta,b,a)$ and $(\gamma,\delta, d, c)$ are 4-circuits in $G'$. Next we show that $a,b,c,d$ are pairwise distinct. Clearly, $a\neq b$, and $c\neq d$. Moreover, $a\neq c$, and $b\neq d$, otherwise $G$ would admit a 4-circuit. What remains to show is that $a\neq d$, and $b\neq c$. We first note that since $G$ is cubic, and $a=d$ if and only if $b=c$. Indeed, if $a=d$ and $b\neq c$, then, $a$ is adjacent to $\alpha, b, c, \delta$, a contradiction. Moreover, since $G$ is cyclically 4-edge-connected, if $a=d$, $G$ would be the Petersen graph. However, it is an easy exercise to check that the Petersen graph is not a counterexample.

\begin{figure}[ht]
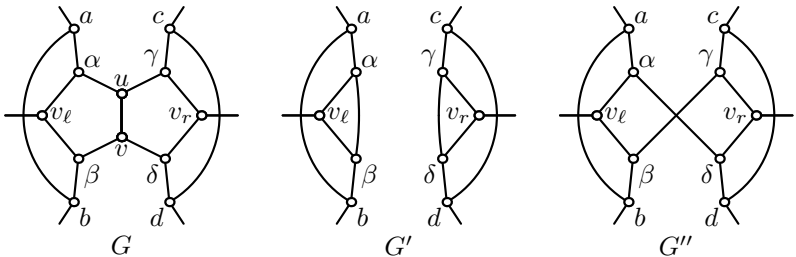

    \centering
    \includegraphics{last.1} $\quad$
    \includegraphics{last.2} $\quad$
    \includegraphics{last.3}
    \caption{If for some graph $G$, the graph $G'$ obtained after an $(\alpha\beta:\gamma\delta)_{uv}$-reduction is a Klee graph, then the graph $G''$ obtained after an $(\alpha\delta:\beta\gamma)_{uv}$-reduction is not.}
    \label{fig:toPetersen}
\end{figure}
Hence, $a,b,c,d$ are four distinct vertices. Consequently, if we apply an $(\alpha\delta:\beta\gamma)_{uv}$-reduction to $G$ we can be sure that $\alpha\delta$ and $\beta\gamma$ do not lie on a triangle in the resulting graph $G''$. In particular, $G''$ is not Klee. 
Let $\mathcal{C}''=\mathcal{C}\setminus \{C_f\}$. By the inductive hypothesis, $G''$ admits a perfect matching $M''$ containing $e$ intersecting every circuit in $\mathcal{C}''$.
By extending the perfect matching $M''$ to a perfect matching $M$ of $G$ containing $e$ as described above (see Figure \ref{figure reduction11}), we can deduce that $M$ intersects every circuit in $\mathcal{C}$, because, in particular, $M$ contains exactly one edge from $E(C_f)\cap \{u\gamma, uv, v\beta\}$. \hfill {\tiny$\blacksquare$}\\

From this point on we may assume that no edge $f$ at distance 2 from $e$ is contained in a circuit in $\mathcal{C}$. As a consequence, we have that no edge at distance at most 2 from $e$ is contained in a circuit in $\mathcal{C}$.\\

\noindent\textbf{Claim 5.} Every vertex at distance 2 from $e$ is traversed by a circuit in $\mathcal{C}$.

\noindent\emph{Proof of Claim 5.} Once again, let us consider an edge $f=uv$ at distance 2 from $e$, with vertices denoted $\alpha$, $\beta$, $\gamma$, $\delta$ as above. In particular, there can be a circuit in $\mathcal{C}$ passing through an endvertex of $f$ only if it is passes through the edges $\beta v$ and $v\delta$.

Suppose that there is no such circuit. As we have seen in the above claim, at least one of the the resulting graphs obtained by an $(\alpha\beta:\gamma\delta)_{uv}$-reduction or an $(\alpha\delta:\gamma\beta)_{uv}$-reduction is not Klee, and so, without loss of generality, we can assume that the graph $G'$ obtained after an $(\alpha\beta:\gamma\delta)_{uv}$-reduction is not Klee. In $G'$, there is a perfect matching $M'$ containing $e$ and intersecting every circuit in $\mathcal{C}'=\mathcal{C}$. It is easy to see that the perfect matching $M$ (of $G$) containing $e$ obtained as an extension of $M'$ still intersects all the circuits in $\mathcal{C}$, a contradiction. 
\hfill {\tiny$\blacksquare$}\\

As a consequence of Claim 5, we have the following.\\

\noindent\textbf{Claim 6.} The edge $e$ does not belong to a 5-circuit.

\noindent\emph{Proof of Claim 6.} Suppose that $e$ belongs to a 5-circuit $C=(t_1,t_2,t_3,t_4,t_5)$. Let the vertices in $G-V(C)$ which are adjacent to some vertex in $C$ be $v_1, v_2, v_3,v_4,v_5$, such that $v_i$ is adjacent to $t_i$ and $e=t_1t_2$. 
Since $G$ is cyclically 5-edge-connected, the $v_i$s are pairwise distinct. Moreover, by Claim 4, no edge in $C$ can be contained in a circuit of $\mathcal{C}$, but by Claim 5, the vertex $t_4$ must be traversed by a circuit of $\mathcal{C}$, which is clearly impossible. \hfill {\tiny$\blacksquare$}\\

We also show that $e$ cannot be at distance 2 from a 5-circuit.\\

\noindent\textbf{Claim 7.} Edges at distance 2 from $e$ do not belong to a 5-circuit.

\noindent\emph{Proof of Claim 7.} Suppose the above assertion is false and let $C=(t_1,t_2,t_3,t_4,t_5)$ be such a 5-circuit, with $t_1$ being an endvertex of an edge adjacent to $e$. We obtain a smaller graph $G'$ by deleting the edge $t_3t_4$, and smooth the vertices $t_3$ and $t_4$. Let the resulting graph be denoted by $G'$. It can be easily seen that $G'$ is cyclically 3-edge-connected and that it does not admit any triangles (and therefore note Klee), because otherwise, $G$ would contain 4-circuits. For each $i\in[5]$, let the vertex in $V(G)-V(C)$ adjacent to $t_i$ be denoted by $t'_i$. We proceed by first showing that a perfect matching $M'$ of $G'$ containing $e$ can be extended to a perfect matching $M$ of $G$ containing $e$. Without loss of generality, assume that $t_1t_2\in M'$. We extend this to a perfect matching of $G$ as follows: 
\[M =
  \begin{cases}
  M'\cup \{t_3t_4\} & $if $t_5t'_5\in M'$,$\\
  M'\cup \{t_1t_5,t_2t_3,t_4t'_4\} \setminus \{t_1t_2,t'_4t_5\} & $otherwise.$
  \end{cases}\]

Next, since in $G$, no edge at distance at most 2 from $e$ belongs to a circuit in $\mathcal{C}$, and every vertex at distance 2 from an endvertex of $e$ is traversed by one, we have that $t_2t_3$ and $t_4t_5$ belong to some circuit in $\mathcal{C}$ (possibly the same). We consider two cases depending on whether the edge $t_3t_4$ is in a circuit edge or not. 

\begin{enumerate}[label=(\roman*)]
\item When $t_3t_4$ is a circuit edge, then the vertices $t_2,t_3,t_4,t_5$ are consecutive vertices on some circuit $C_X$ in $\mathcal{C}$. In this case, we let 
$E(C'_X)=E(C_X)\cup \{t_1t_2,t_1t_5\}\setminus\{t_2t_3,t_3t_4,t_4t_5\}$
and $\mathcal{C}'=(\mathcal{C}\setminus\{C_X\})\cup \{C'_X\}$  to be a collection of disjoint circuits in $G'$. By the inductive hypothesis there exists a perfect matching $M'$ containing $e$ intersecting every circuit in $\mathcal{C}'$. The perfect matching $M$ of $G$ containing $e$ obtained from $M'$ as explained above clearly intersects every circuit in $\mathcal{C}$ (it contains either $t_2t_3$ or $t_3t_4$). This contradicts our initial assumption and so we must have the following case.

\item When $t_3t_4$ is not a circuit edge, we let $C_X$ and $C_Y$ (with $X$ not necessarily distinct from $Y$) to be the circuits containing the edges $t_2t_3$ and $t_4t_5$, respectively. The corresponding circuits $C'_X$ and $C'_Y$ in $G'$ are obtained by smoothing out the vertices $t_3$ and $t_4$. We then set $\mathcal{C}'=(\mathcal{C}\setminus \{C_X,C_Y\})\cup \{C'_X,C'_Y\}$. Note that the edges $t_2t'_2$ and $t_5t'_5$ belong to distinct circuits in $\mathcal{C}'$ if and only if they belong to distinct circuits in $\mathcal{C}$. By the inductive hypothesis, there exists a perfect matching $M'$ of $G'$ containing $e$ intersecting every circuit in $\mathcal{C}'$. Without loss of generality, assume that $t_1t_2\in M'$. The perfect matching $M'$ contains either $t_5t'_5$ or $t_5t'_4$, and consequently, so does the perfect matching $M$ obtained from $M'$ as explained above. This implies that $C_Y$ is intersected by $M$. If $C'_X\neq C'_Y$, then  $M'$ contains an edge of $C'_X$ not incident to $t_2$ in $G'$, and so $M$ contains the corresponding edge of $C_X$ in $G$. Altogether, the perfect matching $M$ obtained from $M'$ as shown above intersects every circuit in $\mathcal{C}$. This is again a contradiction to our initial assumption that $G$ is a counterexample --- thus proving our claim. \hfill {\tiny$\blacksquare$}
\end{enumerate}

Let's get back to analysing an edge $f=uv$ at distance 2 from $e$.
 We cannot use the reduction portrayed in Figure \ref{figure reduction1} as we do not have a guarantee that we can obtain a perfect matching $M$ intersecting the circuit in $\mathcal{C}$ containing the edges $v\beta$ and $v\delta$, which we shall denote by $C_v$. Since $G$ is cyclically 5-edge-connected, this latter circuit is of length at least 5. Let $\delta, v, \beta, y,z $ be consecutive and distinct vertices on this circuit (see Figure \ref{figure reduction2}). Moreover, let $w$ and $x$ be the vertices in $G$ respectively adjacent to $\beta$ and $y$, such that $w\beta, xy \notin E(C_v)$. We proceed by applying an $(\alpha\beta:\gamma\delta)_{uv}$-reduction followed by an $(\alpha x:wz)_{\beta y}$-reduction as portrayed in Figure \ref{figure reduction2}.

\begin{figure}[ht]
      \centering
      \includegraphics{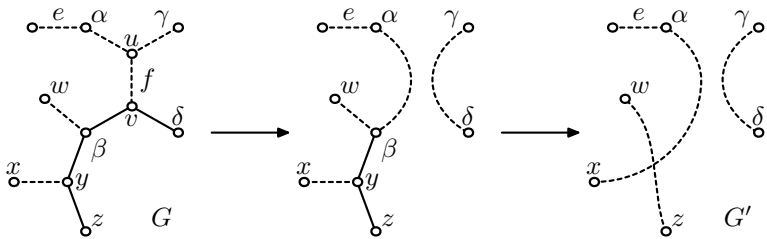}
      \caption{An $(\alpha\beta:\gamma\delta)_{uv}$-reduction followed by an $(\alpha x:wz)_{\beta y}$-reduction. Dashed edges represent edges outside $\mathcal{C}$ or $\mathcal{C'}$.}
      \label{figure reduction2}
\end{figure}

Let the resulting graph after these two reductions be denoted by $G'$, and let $\mathcal{C}'=\mathcal{C}\setminus\{C_v\}$.
Since $G'$ is obtained by applying twice the reduction at an edge at distance 2 from $e$, a perfect matching of $G'$ containing $e$ can always be extended to a perfect matching of $G$ containing $e$ (recall that $\beta y$ cannot be adjacent to $e$, since $\beta y\in C_v$). 
Moreover, any such matching contains either the edge $\beta y$ or the edge $yz$, and so it also contains at least one edge of the circuit $C_v$. Therefore, as long as $G'$ is cyclically 3-edge-connected and not Klee, by minimality of $G$, there exists a perfect matching $M'$ of $G'$ containing $e$ intersecting every circuit in $\mathcal{C}'$, which extends to a perfect matching $M$ of $G$ containing $e$ intersecting every circuit in $\mathcal{C}$. This contradicts our initial assumption that $G$ is a counterexample. 

Therefore, $G'$ is either Klee or admits a (cyclic) edge-cut of size at most 2 (more details after proof of Claim 8). \\

\noindent\textbf{Claim 8.} The graph $G'$ is not Klee.

\noindent\emph{Proof of Claim 8.} Suppose that $G'$ is Klee. The edge $\gamma\delta$ cannot be on a triangle otherwise the edges $uv$ and $u\gamma$ at distance 2 from $e$ belong to a 5-circuit, contradicting Claim 7. Since $G$ is cyclically 5-edge-connected, if $G'$ is Klee then we must have that $\alpha x$ and $wz$ each lie on a triangle (see Lemma \ref{lemma 2 disjoint triangles klee}). Therefore, in particular, if $G'$ is Klee, $\alpha$ and $x$ must have a common neighbour (in both $G'$ and $G$, so it is not $u$). 
This common neighbour cannot be $u'$, the neighbour of $\alpha$ not incident to $e$ and distinct from $u$, either, since $x$ would then be a vertex at distance 2 from the endvertex $\alpha$ of $e$ (via $u'$) and so, by Claim 5, it would be traversed by a circuit in $\mathcal{C}$. However, the edges $xu'$ and $xy$ are not in any circuit in $\mathcal{C}$, a contradiction.
Therefore, the common neighbour of $x$ and $\alpha$ is $\alpha'$, the other endvertex of $e$.
By Lemma \ref{lemma 4 circuit klee}, one edge of the triangle $(x,\alpha,\alpha')$ lies on a 4-circuit in $G'$, which is not present in $G$. First, consider the case when exactly one of $\alpha'x$ and $\alpha'\alpha$ lie on a 4-circuit, say $(\alpha', s,t,x)$ or $(\alpha', s,t,\alpha)$ accordingly. Since the edges $\alpha's, xt,\alpha'x,\alpha'\alpha, \alpha t$ all belong to $G$, $s$ and $t$ cannot be adjacent in $G$, and so $\{s,t\}$ is equal to $\{w,z\}$ or $\{\gamma,\delta\}$. If $\{s,t\}=\{\gamma,\delta\}$, then we either have that $\alpha'\gamma\in E(G)$, implying that $(\alpha',\gamma,u,\alpha)$ is a 4-circuit in $G$, or that $\alpha'\delta\in E(G)$, implying that $(\alpha',\alpha,u,v,\delta)$ is a 5-circuit in $G$ containing $e$, both a contradiction. Hence, $\{s,t\}=\{w,z\}$. Since, $G$ is cyclically 5-edge-connected, $x$ cannot be adjacent to $z$ nor $w$, and so, we must have that the edge lying on the 4-circuit with $w$ and $z$ is $\alpha'\alpha$. However, this is impossible since $z$ cannot be adjacent to an endvertex of $e$. Consequently, we must have that the edge of the triangle $(x,\alpha,\alpha')$ lying on a 4-circuit in $G'$ is $\alpha x$. In this case, $st$ cannot be an edge in $G$, otherwise $(\alpha', \alpha, s,t,x)$ would be a 5-circuit in $G$ containing $e$, contradicting Claim 6. Thus, $\{s,t\}$ is equal to $\{w,z\}$ or $\{\gamma,\delta\}$ once again. As before, $x$ cannot be adjacent to $z$ or $w$, implying that $\alpha$ being adjacent to $\gamma$ or $\delta$, respectively giving rise to $(\alpha,u,\gamma)$ or $(\alpha, u, v, \delta)$  in $G$, a contradiction. Therefore, $G'$ is not Klee.\hfill {\tiny$\blacksquare$}\\

Consequently, $G'$ must admit some (cyclic) edge-cut of size at most 2. Whenever $G$ is cyclically 4-edge-connected, by the analysis done at the end of the main result in \cite{quelling1}, we know the graph $G'$ is bridgeless, so if $G'$ is not (cyclically) 3-edge-connected, it admits a 2-edge-cut.
We next show that this cannot be the case, that is, if $G'$ admits a 2-edge-cut, then $G$ is not a counterexample to our statement.\\

\noindent\textbf{Claim 9.} $G'$ is cyclically 3-edge-connected, unless $\alpha$ is adjacent to $x$ in $G$.

\noindent\emph{Proof of Claim 9.}  Suppose that $G'$ admits a 2-edge-cut $X_2=\{g_1,g_2\}$. Let $\Omega_{1}=\{\alpha, \gamma,\delta, w,x,z\}$ and let $\Omega_{2}=\{u,v,\beta, y\}$.

We label the vertices of $G'\setminus X_2$ with labels $A$ and $B$ depending in which connected component of $G'\setminus X_2$ they belong to.
 Consequently, $G'$ has exactly two edges which are not monochromatic: $g_1$ and $g_2$. We consider different cases depending on the number of vertices in $\Omega_{1}$ labelled $A$ in $G'$, and show that, in each case, a 2-edge-cut in $G'$ would imply that $G$ is not cyclically 5-edge-connected or not a counterexample to our statement.  Without loss of generality, we shall assume that the number of vertices in $\Omega_{1}$ labelled $A$ is at least the number of vertices in $\Omega_{1}$ labelled $B$. We consider four cases.

\begin{enumerate}[label=(B\arabic*), start=0]
\item All the vertices in $\Omega_{1}$ are labelled $A$ in $G'$.

First, we extend this labelling of $V(G')$ to a partial labelling of $V(G)$ by giving to the vertices in $V(G)-\Omega_{2}$ the same label they had in $G'$. We then give label $A$ to all the vertices in $\Omega_{2}$. However, this means that $G$ has exactly two edges, corresponding to the edges in $X_2$, which are not monochromatic, a contradiction, since $G$ does not admit any 2-edge-cuts.
 
\item Exactly 5 vertices in $\Omega_{1}$ are labelled $A$ in $G'$.

This means that exactly one of the edges in $\{\alpha x, wz, \gamma\delta\}$ belongs to $X_2$, say $g_1$, without loss of generality. Once again, we extend this labelling to a partial labelling of $G$, and then give label $A$ to all the vertices in $\Omega_{2}$. However, this means that $G$ has an edge which has exactly one endvertex in $\Omega_{1}$ labelled $B$ and exactly one endvertex in $\Omega_{2}$ labelled $A$, which together with the edge $g_2$ gives a 2-edge-cut in $G$, a contradiction once again. 

\item Exactly 4 vertices in $\Omega_{1}$ are labelled $A$ in $G'$.

We consider two cases depending on whether there is one or three monochromatic edges in $\{\alpha x, wz,\gamma\delta\}$. First, consider the case when $\{\alpha x, wz,\gamma\delta\}$ has exactly one monochromatic edge, meaning that $X_2\subset\{\alpha x, wz,\gamma\delta\}$. As in the previous cases, we extend this labelling to a partial labelling of $V(G)$, and then give label $A$ to all the vertices in $\Omega_{2}$. However, this means that $G$ has exactly two edges each having exactly one endvertex in $\Omega_{1}$ labelled $B$ and exactly one endvertex in $\Omega_{2}$ labelled $A$, meaning that $G$ admits a 2-edge-cut, a contradiction. 

Therefore the edges $\{\alpha x, wz,\gamma\delta\}$ are all monochromatic: two edges with all their endvertices coloured $A$, and one edge with its endvertices coloured $B$. We extend this labelling to a partial labelling of $V(G)$, and then give label $A$ to all the vertices in $\Omega_{2}$. This gives rise to exactly two edges each having exactly one endvertex in $\Omega_{1}$ labelled $B$ and exactly one endvertex in $\Omega_{2}$ labelled $A$. These two edges together with the two edges in $X_2$ form a 4-edge-cut $X_4$ of $G$. Since the latter is cyclically 5-edge-connected this 4-edge-cut is not cyclic --- it separates two adjacent vertices from the rest of the graph. Since $w\ne z$ and $\gamma\ne \delta$ (otherwise there would be a 3-circuit in $G$) and also $\alpha \ne x$ (otherwise $C_v$ would contain edges at distance 1 from $e$), these two adjacent vertices in $G$ are endvertices of exactly one of $\alpha x$, $wz$, or $\gamma\delta$ in $G'$, and the 2-edge-cut in $G'$ separates a 2-circuit from the rest of the graph. However, $G$ can contain neither the edge $wz$ nor $\gamma\delta$, since $G$ has no 4-circuits. Thus, we must have that $\alpha$ is adjacent to $x$ in $G$. 

\item Exactly 3 vertices in $\Omega_{1}$ are labelled $A$ in $G'$.

Since $G'$ has exactly two edges which are not monochromatic, there is exactly one edge in $\{\alpha x, wz,\gamma\delta\}$ which is not monochromatic. The latter corresponds to one of the edges in $X_2$, say $g_1$, without loss of generality. As before, we extend this labelling to a partial labelling of $V(G)$, and then give label $A$ to all the vertices in $\Omega_{2}$. This gives rise to exactly three edges each having exactly one endvertex in $\Omega_{1}$ labelled $B$ and exactly one endvertex in $\Omega_{2}$ labelled $A$, which together with the edge $g_2$ from $X_2$ form a 4-edge-cut $X_4$ of $G$. As in the previous case, $X_4$ separates two adjacent vertices from the rest of the graph --- $G$ has exactly two vertices labelled $B$. As in the previous case, the endvertices of the monochromatic edge belonging to $\{\alpha x, wz,\gamma\delta\}$ (in $G'$) which are labelled $B$ must be either equal or adjacent in $G$, which is only possible if $\alpha$ is adjacent to $x$ in $G$.
\hfill {\tiny$\blacksquare$}
\end{enumerate}

Therefore, given any edge $f=uv$ at distance 2 from $e$, applying an $(\alpha\beta:\gamma\delta)_{uv}$-reduction followed by an $(\alpha x:wz)_{\beta y}$-reduction would lead to $\alpha$ being adjacent to $x$ in $G$. Let $y'$ and $z'$ be the two consecutive vertices on $C_v$ such that $y'$ is adjacent to $\delta$ (note that $y'$ or $z'$ are possibly equal to $z$). Let $w'$ and $x'$ respectively be the vertices in  $G-C_v$ adjacent to $\delta$ and $y'$, and let $\alpha'$ be the other endvertex of $e$. Applying an $(\alpha\delta:\gamma\beta)_{uv}$-reduction followed by an $(\alpha x':w'z')_{\delta y'}$-reduction leads to $\alpha$ being adjacent to $x'$. Therefore, $x'$ can be equal to $\alpha', u$ or $x$. If $x'=\alpha'$, then the edge $\delta y'$ would be an edge belonging to $C_v$  at distance 2 from the edge $e$, and if $x'=u$, then $(u,v,\delta,y')$ would be a 4-circuit in $G$, with both cases leading to a contradiction. Therefore, $x=x'$.

Let $G'$ be the graph obtained after an $(\alpha\beta:\gamma\delta)_{uv}$-reduction and let $\mathcal{C}'=\mathcal{C}\setminus \{C_v\}$. By induction, there exists a perfect matching $M'$ containing $e$ intersecting all the circuits in $\mathcal{C}'$, and it can be extended into a perfect matching $M$ of $G$ containing $e$. Suppose that $M$ does not intersect $C_v$ (it is the only circuit in $\mathcal{C}$ that $M$ could possibly avoid). To cover vertices $y$ and $y'$, we must have $\{xy,xy'\}\subset M$ --- which is impossible. 
\end{proof}

Here are some consequences of Theorem \ref{theorem acyclic2}. Corollary \ref{cor 3ec collection} follows by the above result and Corollary \ref{cor klee collection}.

\begin{corollary}\label{cor 3ec collection}
Let $G$ be a cyclically 3-edge-connected cubic graph and let $\mathcal{C}$ be a collection of disjoint circuits of $G$. Then, there exists a perfect matching $M$ such that $M\cap E(C)\neq \emptyset$, for every $C\in\mathcal{C}$.
\end{corollary}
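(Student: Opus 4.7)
The plan is to split the argument into two cases according to whether $G$ is a Klee-graph or not, and invoke the two earlier results exactly as signalled in the sentence preceding the statement. No induction or new structural analysis is required; the corollary is essentially a repackaging of Theorem~\ref{theorem acyclic2} together with the special treatment of the excluded family.

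First, suppose $G$ is not a Klee-graph. Pick any edge $e \in E(G)$ (the corollary places no constraint on which matching edge occurs, so any choice will do). Then Theorem~\ref{theorem acyclic2} applies to the pair $(e,\mathcal{C})$ and produces a perfect matching $M$ of $G$ containing $e$ such that every circuit in $\mathcal{C}$ contains an edge of $M$. In particular $M \cap E(C) \neq \emptyset$ for every $C \in \mathcal{C}$, which is the desired conclusion.

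Second, suppose $G$ is a Klee-graph. Then $G$ is Hamiltonian, and Corollary~\ref{cor klee collection} directly produces a perfect matching $M$ of $G$ intersecting at least one edge of every circuit in $\mathcal{C}$, again yielding the statement. Since these two cases exhaust all cyclically 3-edge-connected cubic graphs, the corollary follows.

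There is no real obstacle to overcome here: the hard work has been done in Theorem~\ref{theorem acyclic2} and Corollary~\ref{cor klee collection}. The only thing one needs to be careful about is that Theorem~\ref{theorem acyclic2} explicitly excludes Klee-graphs, so one must check that the Klee case is not left uncovered — and it is precisely handled by Corollary~\ref{cor klee collection}, whose proof relied on Hamiltonicity of Klee-graphs together with Proposition~\ref{prop ham collection}. No prescribed edge is needed in the Klee case because the statement of the corollary does not force one.
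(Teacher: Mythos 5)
Your proposal is correct and matches the paper's own (one-line) justification exactly: the paper states that the corollary "follows by the above result and Corollary \ref{cor klee collection}", which is precisely your case split into non-Klee graphs (handled by Theorem \ref{theorem acyclic2} with an arbitrary choice of $e$) and Klee-graphs (handled via Hamiltonicity and Proposition \ref{prop ham collection}).
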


\begin{corollary}
Let $G$ be a cyclically 3-edge-connected cubic graph. For every perfect matching $M_1$ of $G$, there exists a perfect matching $M_2$ of $G$ such that $G\setminus (M_1\cup M_2)$ is acyclic.
\end{corollary}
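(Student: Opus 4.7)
The plan is to derive this corollary as an immediate consequence of Corollary \ref{cor 3ec collection}. The key observation is that in a cubic graph, the complement of a perfect matching is a $2$-factor, which is itself a disjoint union of circuits. So the perfect matching $M_1$ naturally gives rise to a collection of disjoint circuits to which we can apply the preceding corollary.

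More precisely, first I would set $\mathcal{C}_{M_1}$ to be the set of connected components of the spanning subgraph $G \setminus M_1$. Since $G$ is cubic and $M_1$ is a perfect matching, every vertex has degree exactly $2$ in $G \setminus M_1$, so $\mathcal{C}_{M_1}$ is indeed a collection of pairwise vertex-disjoint (hence edge-disjoint) circuits of $G$. Next I would apply Corollary \ref{cor 3ec collection} to $G$ with this collection, obtaining a perfect matching $M_2$ of $G$ such that $M_2 \cap E(C) \neq \emptyset$ for every $C \in \mathcal{C}_{M_1}$.

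It remains to verify that $G \setminus (M_1 \cup M_2)$ is acyclic. Any circuit $D$ in $G \setminus (M_1 \cup M_2)$ is in particular a circuit in $G \setminus M_1$, so $D$ must be contained in some connected component of $G \setminus M_1$; since each such component is itself a circuit, $D$ must coincide with an element of $\mathcal{C}_{M_1}$. But by construction $M_2$ hits every circuit of $\mathcal{C}_{M_1}$, so $D$ contains an edge of $M_2$, contradicting $D \subseteq G \setminus (M_1 \cup M_2)$. Hence no such circuit $D$ exists, and $G \setminus (M_1 \cup M_2)$ is acyclic, as required.

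I do not anticipate any real obstacle; the entire content of the corollary is repackaged in Corollary \ref{cor 3ec collection}, which itself combines Theorem \ref{theorem acyclic2} (for non-Klee graphs) with Corollary \ref{cor klee collection} (for Klee-graphs). The only subtlety worth spelling out is the translation between the ``$1^+$-factor'' viewpoint used in Theorem \ref{theorem acyclic} and the ``collection of disjoint circuits'' viewpoint: here we use the latter because the complement of a perfect matching in a cubic graph has the especially clean structure of a $2$-factor.
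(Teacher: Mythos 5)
Your proposal is correct and follows exactly the route the paper intends: the complement of $M_1$ is a $2$-factor whose components form a collection of disjoint circuits, and Corollary \ref{cor 3ec collection} (which already merges Theorem \ref{theorem acyclic2} with the Klee-graph case via Corollary \ref{cor klee collection}) supplies the required $M_2$. The translation between the $1^+$-factor and circuit-collection viewpoints that you spell out is precisely the equivalence the paper establishes after stating Theorem \ref{theorem acyclic2}, so there is nothing to add.
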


\end{document}